\newtheorem{theorem}{Theorem}[section]
\newtheorem{lemma}[theorem]{Lemma}
\newtheorem{proposition}[theorem]{Proposition}
\newtheorem{corollary}[theorem]{Corollary}
\theoremstyle{remark}
\newtheorem{remark}[theorem]{Remark}
\numberwithin{equation}{section}
\def\r{{\mathbb R}}
\def\dsp{\displaystyle}
\def\eqref#1{(\ref{#1})}
\def\1{{1\hspace{-1.2mm}{\rm I}}}
\def\F{\mathcal F}
\def\kxi{\begin{pmatrix}1\\ \xi\end{pmatrix}}
\def\cal{\mathcal}
\begin{document}
\title[Kinetic entropy and HR scheme for the Saint-Venant system]{Kinetic entropy inequality and hydrostatic reconstruction scheme for the Saint-Venant system}

\author[E. Audusse]{Emmanuel Audusse}
\address{Universit\'e Paris 13, Laboratoire d'Analyse, G\'eom\'etrie et Applications, 
99 av. J.-B. Cl\'ement, F-93430 Villetaneuse, France - Inria, ANGE
project-team, Rocquencourt - B.P. 105, F78153 Le Chesnay cedex, France
- CEREMA, ANGE project-team,  134 rue de Beauvais, F-60280
Margny-L\`es-Compi\`egne, France - Sorbonne University, UPMC University Paris VI, ANGE project-team, UMR 7958 LJLL, F-75005 Paris, France}
\email{eaudusse@yahoo.fr}
\author[F. Bouchut]{Fran\c cois Bouchut}
\address{Universit\'e Paris-Est, Laboratoire d'Analyse et de Math\'ematiques Appliqu\'ees
(UMR 8050), CNRS, UPEM, UPEC, F-77454, Marne-la-Vall\'ee, France}
\email{Francois.Bouchut@u-pem.fr}
\author[M.-O. Bristeau]{Marie-Odile Bristeau}
\address{Inria, ANGE
project-team, Rocquencourt - B.P. 105, F78153 Le Chesnay cedex, France
- CEREMA, ANGE project-team,  134 rue de Beauvais, F-60280
Margny-L\`es-Compi\`egne, France - Sorbonne University, UPMC University Paris VI, ANGE project-team, UMR 7958 LJLL, F-75005 Paris, France}
\email{Marie-Odile.Bristeau@inria.fr}
\author[J. Sainte-Marie]{Jacques Sainte-Marie}
\address{Inria, ANGE
project-team, Rocquencourt - B.P. 105, F78153 Le Chesnay cedex, France
- CEREMA, ANGE project-team,  134 rue de Beauvais, F-60280
Margny-L\`es-Compi\`egne, France - Sorbonne University, UPMC University Paris VI, ANGE project-team, UMR 7958 LJLL, F-75005 Paris, France}
\email{Jacques.Sainte-Marie@inria.fr}

\begin{abstract}
A lot of well-balanced schemes have been proposed for discretizing the classical Saint-Venant system
for shallow water flows with non-flat bottom. Among them,
the hydrostatic reconstruction scheme is a simple and efficient one.
It involves the knowledge of an arbitrary solver
for the homogeneous problem (for example Godunov, Roe, kinetic\ldots).
If this solver is entropy satisfying, then the hydrostatic reconstruction scheme
satisfies a semi-discrete entropy inequality.
In this paper we prove that, when used with the classical kinetic
solver, the hydrostatic reconstruction scheme also satisfies a
fully discrete entropy inequality, but with an error term.
This error term tends to zero strongly when the space step tends to zero, including solutions with shocks.
We prove also that the hydrostatic reconstruction scheme does not satisfy the entropy inequality
without error term.
\end{abstract}

\keywords{Shallow water equations, well-balanced schemes,
hydrostatic reconstruction, kinetic solver, fully discrete entropy inequality}

\subjclass[2000]{65M12, 74S10, 76M12, 35L65}

\maketitle

\section{Introduction}

The classical Saint-Venant system for shallow water describes the height of
water $h(t,x)\geq 0$, and the water velocity $u(t,x)\in\r$ ($x$ denotes a coordinate
in the horizontal direction) in the direction parallel to the bottom.
It assumes a slowly varying topography $z(x)$, and reads
\begin{equation}\begin{array}{l}
	\dsp \partial_t h+\partial_x(hu)=0,\\
	\dsp\partial_t(hu)+\partial_x(hu^2+g\frac{h^2}{2})+gh\partial_x z=0,
	\label{eq:SV}
	\end{array}
\end{equation}
where $g>0$ is the gravity constant.
This system is completed with an entropy (energy) inequality
\begin{equation}
	\partial_t\biggl(h\frac{u^2}{2}+g\frac{h^2}{2}+ghz\biggr)
	+\partial_x\biggl(\bigl(h\frac{u^2}{2}+gh^2+ghz\bigr)u\biggr)\leq 0.
	\label{eq:entropySV}
\end{equation}
We shall denote $U=(h,hu)^T$ and
\begin{equation}
	\eta(U)=h\frac{u^2}{2}+g\frac{h^2}{2},
	\qquad G(U)=\bigl(h\frac{u^2}{2}+gh^2\bigr)u
	\label{eq:etaG}
\end{equation}
the entropy and entropy fluxes without topography.

The derivation of an efficient, robust and stable numerical scheme for
the Saint-Venant system has received an extensive coverage. The issue involves the
notion of well-balanced schemes, and we refer the reader to~\cite{bouchut_book,J,Gbook,XS}
and references therein.

The hydrostatic reconstruction (HR), introduced in \cite{ABBKP}, is a general and efficient method
that evaluates an arbitrary solver for the homogeneous problem, like Roe, relaxation, or kinetic solvers
on {\sl reconstructed states} built with the steady state relations.
It leads to a consistent, well-balanced, positive scheme satisfying a semi-discrete entropy inequality,
in the sense that the inequality holds only in the limit when the timestep tends to zero.
The method has been generalized to balance all subsonic steady-states in \cite{BM}, and to multi-layer
shallow water in \cite{BZ} with the source-centered variant of the hydrostatic reconstruction.
Generic extensions are provided in \cite{CPP}, and a case of moving water is treated in \cite{NX}.
The HR technique enables second-order computations on unstructured meshes, see \cite{bristeau}.
It has also been used to derive efficient and robust numerical schemes
approximating the incompressible Euler and Navier-Stokes equations
with free surface \cite{ABPerthSM,ABPelSM},  i.e. non necessarily shallow water flows.

The aim of this paper is to prove that the hydrostatic reconstruction,
when used with the classical kinetic solver~\cite{BGKcons,kinetic-RR,PS,FVS,bristeau,GSM,BGSM}, satisfies a
fully discrete entropy inequality, stated in Corollary \ref{corol:estim_lip_l1}.
However, as established in Proposition \ref{prop necessary-error},
this inequality necessarily involves an error term. The main result of this paper is that
this error term is in the square of the topography increment,
ensuring that it tends to zero strongly as the space step
tends to zero, for solutions that can include shocks. The topography needs however to be
Lipschitz continuous.

In general, to satisfy an entropy inequality is a criterion for the stability of a scheme.
In the fully discrete case, it enables in particular to get an {\it a priori} bound on the total energy.
In the time-only discrete case and without topography, the single energy inequality that holds for the kinetic scheme
ensures the convergence \cite{BB}. The fully discrete case (still without topography) has been
treated in \cite{Ber}. Another approach to get a scheme satisfying a fully discrete entropy inequality
is proposed in \cite{CSS}. Following our results, the proof of convergence of the hydrostatic reconstruction scheme
with kinetic numerical flux will be performed in a forthcoming paper.

The outline of the paper is as follows.
We recall in Section~\ref{sec:kin_without_topo} the
kinetic scheme without topography and its entropy analysis,
in both the discrete and semi-discrete cases. We show in particular how one can see that
the fully discrete inequality is always less dissipative than the semi-discrete one, see Lemma \ref{lemma:seminotopo}.
In Section~\ref{sec:kin_HR} we propose a kinetic interpretation of the hydrostatic reconstruction
and we give its properties. We analyze in detail the entropy inequality.
The semi-discrete scheme is considered first.
Our main result Theorem~\ref{thentrfull_JSM} concerning the fully discrete scheme is finally proved.\\

We end this section by recalling the classical kinetic approach, used in \cite{PS} for example,
and its relation with numerical schemes.
The kinetic Maxwellian is given by
\begin{equation}
	M(U,\xi)=\frac{1}{g\pi}\Bigl(2gh-(\xi-u)^2\Bigr)_+^{1/2},
	\label{eq:kinmaxw}
\end{equation}
where $U=(h,hu)^T$, $\xi\in\r$ and $x_+\equiv\max(0,x)$ for any $x\in\r$.
It satisfies the following moment relations,
\begin{equation}\begin{array}{c}
	\dsp \int_\r \kxi M(U,\xi)\,d\xi=U,\\
	\dsp\int_\r \xi^2 M(U,\xi)\,d\xi=hu^2+g\frac{h^2}{2}.
	\label{eq:kinmom}
	\end{array}
\end{equation}
These definitions allow us to obtain a {\sl kinetic representation} of the
Saint-Venant system.
\begin{lemma}
If the topography $z(x)$ is Lipschitz continuous,
the pair of functions $(h, hu)$ is a weak solution to the Saint-Venant system~\eqref{eq:SV}
if and only if $M(U,\xi)$ satisfies the kinetic equation
\begin{equation}
	\partial_t M+\xi\partial_x M-g(\partial_x z)\partial_\xi M=Q,
	\label{eq:kinrepres}
\end{equation}
for some ``collision term'' $Q(t,x,\xi)$ that satisfies, for a.e. $(t,x)$,
\begin{equation}
	\int_\r Q d\xi = \int_\r \xi Qd \xi = 0.
	\label{eq:kinrepresintcoll}
\end{equation}
\label{lemma:sv_kin_rep}
\end{lemma}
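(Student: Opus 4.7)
The plan is to establish the equivalence by taking moments of the kinetic equation against $1$ and $\xi$, using the moment relations \eqref{eq:kinmom}. The key observation is that the term $-g(\partial_x z)\partial_\xi M$ is chosen precisely so that its first two moments in $\xi$ recover the gravitational source term $gh\partial_x z$ in the momentum equation and vanish in the mass equation. Since $M(U,\xi)$ is continuous, non-negative, and compactly supported in $\xi$ (its support in $\xi$ is the interval $[u-\sqrt{2gh},\,u+\sqrt{2gh}]$), moments and integration by parts in $\xi$ are straightforward.

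For the forward direction (weak solution $\Rightarrow$ kinetic representation), I would simply \emph{define} the collision term by
\begin{equation*}
 Q \equiv \partial_t M+\xi\partial_x M-g(\partial_x z)\partial_\xi M,
\end{equation*}
understood as a distribution in $(t,x,\xi)$. Because $z$ is Lipschitz, $\partial_x z\in L^\infty_x$, and since $M$ does not depend on $\xi$ outside a bounded interval controlled by $h$ and $u$, the product $(\partial_x z)\partial_\xi M = \partial_\xi\bigl((\partial_x z)M\bigr)$ is a well-defined distribution. I then verify \eqref{eq:kinrepresintcoll} by integrating against $1$ and $\xi$. Using \eqref{eq:kinmom} together with the compact support of $M$ in $\xi$ (which kills boundary terms in the integration by parts), I obtain
\begin{equation*}
 \int_\r Q\,d\xi=\partial_t h+\partial_x(hu),
 \qquad
 \int_\r \xi Q\,d\xi=\partial_t(hu)+\partial_x\!\Bigl(hu^2+g\frac{h^2}{2}\Bigr)+gh\partial_x z,
\end{equation*}
where the source term in the second identity comes from $-g(\partial_x z)\int \xi\partial_\xi M\,d\xi=g(\partial_x z)\int M\,d\xi=gh\partial_x z$. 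Both right-hand sides vanish by \eqref{eq:SV}, giving \eqref{eq:kinrepresintcoll}.

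For the converse direction, I assume $M(U,\xi)$ satisfies \eqref{eq:kinrepres} for some $Q$ with vanishing first two $\xi$-moments, and reverse the computation: taking $\int d\xi$ and $\int\xi\,d\xi$ of \eqref{eq:kinrepres} and applying \eqref{eq:kinmom} yields exactly the two equations of \eqref{eq:SV} in the distributional sense, i.e., $(h,hu)$ is a weak solution.

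The only delicate point is to justify the manipulations when $z$ is merely Lipschitz and $(h,hu)$ merely bounded measurable (which is the natural setting for weak solutions including shocks). I would handle this by noting that $\partial_x z\in L^\infty$ and that for any fixed smooth compactly supported test function $\varphi(t,x)$, the quantity $\int M(U(t,x),\xi)\varphi(t,x)\,dt\,dx$ is a bounded measurable function of $\xi$ supported in a compact $\xi$-interval, so all moment computations and the integration by parts $\int\xi\partial_\xi M\,d\xi=-\int M\,d\xi$ are legitimate. The main (mild) obstacle is thus simply to check that the product $(\partial_x z)\partial_\xi M$ is unambiguously defined as a distribution; writing it as $\partial_\xi\bigl((\partial_x z)M\bigr)$ and using $\partial_x z\in L^\infty$ settles the matter.
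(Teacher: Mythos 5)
Your proposal is correct and follows essentially the same route as the paper: multiply the kinetic equation by $(1,\xi)^T$, integrate in $\xi$ using the moment relations \eqref{eq:kinmom} and an integration by parts on the $\partial_\xi M$ term, and in the converse direction simply define $Q$ by \eqref{eq:kinrepres}. Your extra care in writing $(\partial_x z)\partial_\xi M=\partial_\xi\bigl((\partial_x z)M\bigr)$ to make sense of the product distributionally is a welcome (if implicit in the paper) clarification, but not a different argument.
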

\begin{proof}
If \eqref{eq:kinrepres} and \eqref{eq:kinrepresintcoll} are satisfied,
we can multiply \eqref{eq:kinrepres} by $(1,\xi)^T$, and integrate with respect to $\xi$.
Using~\eqref{eq:kinmom} and \eqref{eq:kinrepresintcoll} and integrating by parts
the term in $\partial_\xi M$, we obtain \eqref{eq:SV}.
Conversely, if $(h, hu)$ is a weak solution to \eqref{eq:SV}, just define $Q$ by \eqref{eq:kinrepres};
it will satisfy \eqref{eq:kinrepresintcoll} according to the same computations.
\end{proof}
The standard way to use Lemma \ref{lemma:sv_kin_rep} is to write
a kinetic relaxation equation~\cite{Perthame90,Perthame92,perthame_coron,BGKcons,FVS}, like
\begin{equation}
	\partial_t f+\xi\partial_x f-g(\partial_x z)\partial_\xi f=\frac{M-f}{\epsilon},
	\label{eq:kinrelax}
\end{equation}
where $f(t,x,\xi)\geq 0$, $M=M(U,\xi)$ with $U(t,x)=\int(1,\xi)^Tf(t,x,\xi)d\xi$,
and $\epsilon>0$ is a relaxation time. In the limit $\epsilon\to 0$ we recover formally
the formulation \eqref{eq:kinrepres}, \eqref{eq:kinrepresintcoll}.
We refer to \cite{BGKcons} for general considerations on such kinetic relaxation models
without topography, the case with topography being introduced in \cite{PS}.
Note that the notion of {\sl kinetic representation} as \eqref{eq:kinrepres}, \eqref{eq:kinrepresintcoll}
differs from the so called {\sl kinetic formulations} where a large set of entropies is involved,
see \cite{Perthame}. For systems of conservation laws, these kinetic formulations
include non-advective terms that prevent from writing down simple approximations.
In general, kinetic relaxation approximations can be compatible with just a single entropy.
Nevertheless this is enough for proving the convergence as $\epsilon\to 0$, see \cite{BB}.\\

Apart from satisfying the moment relations \eqref{eq:kinmom}, the particular form
\eqref{eq:kinmaxw} of the Maxwellian is taken indeed for its compatibility with a kinetic entropy,
that ensures energy dissipation in the relaxation approximation \eqref{eq:kinrelax}.
Consider the kinetic entropy
\begin{equation}
	H(f,\xi,z)=\frac{\xi^2}{2}f+\frac{g^2\pi^2}{6}f^3+gzf,
	\label{eq:kinH}
\end{equation}
where $f\geq 0$, $\xi\in\r$ and $z\in\r$, and its version without topography
\begin{equation}
	H_0(f,\xi)=\frac{\xi^2}{2}f+\frac{g^2\pi^2}{6}f^3.
	\label{eq:kinH0}
\end{equation}
Then one can check the relations
\begin{equation}
	\int_\r H\bigl(M(U,\xi),\xi,z\bigr)\,d\xi=\eta(U)+ghz,
	\label{eq:kinint}
\end{equation}
\begin{equation}
	\int_\r \xi H\bigl(M(U,\xi),\xi,z\bigr)\,d\xi=G(U)+ghzu.
	\label{eq:kinflint}
\end{equation}
One has the following subdifferential inequality and entropy minimization principle.
\begin{lemma}
\noindent (i) For any $h\geq 0$, $u\in\r$, $f\geq 0$ and $\xi\in\r$
\begin{equation}
	H_0(f,\xi)\geq H_0\bigl(M(U,\xi),\xi\bigr)+\eta'(U)\kxi
	\bigl(f-M(U,\xi)\bigr).
	\label{eq:subdiff}
\end{equation}

\noindent (ii) For any $f(\xi)\geq 0$, setting $h=\int f(\xi)d\xi$, $hu=\int \xi f(\xi)d\xi$ (assumed finite), one has
\begin{equation}
	\eta(U)=\int_\r H_0\bigl(M(U,\xi),\xi\bigr)\,d\xi
	\leq\int_\r H_0\bigl(f(\xi),\xi\bigr)\,d\xi.
	\label{eq:entropmini}
\end{equation}
\end{lemma}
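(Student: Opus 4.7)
The plan is to treat part (i) as a standard convexity inequality for $H_0(\cdot,\xi)$ on $[0,\infty)$, with a special treatment for the edge of the support of $M(U,\cdot)$, and then to deduce part (ii) by integrating part (i) in $\xi$.

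For part (i), first I would compute the two sides that should match. On one hand, using $U=(h,hu)^T$ and the expression $\eta(U)=h u^2/2+g h^2/2$, a direct differentiation yields
\begin{equation*}
\eta'(U)\kxi = -\frac{u^2}{2}+gh+u\xi.
\end{equation*}
On the other hand, $\partial_f H_0(f,\xi)=\xi^2/2+(g^2\pi^2/2)f^2$. Evaluating at $f=M(U,\xi)$ in the set $\{(\xi-u)^2<2gh\}$, where $M(U,\xi)^2=(2gh-(\xi-u)^2)/(g^2\pi^2)$, I obtain
\begin{equation*}
\partial_f H_0\bigl(M(U,\xi),\xi\bigr)=\frac{\xi^2}{2}+gh-\frac{(\xi-u)^2}{2}=-\frac{u^2}{2}+gh+u\xi=\eta'(U)\kxi.
\end{equation*}
Since $H_0(\cdot,\xi)$ is convex on $[0,\infty)$ (sum of a linear and a cubic term in $f\geq0$), the standard subdifferential inequality then gives \eqref{eq:subdiff} at every $\xi$ with $M(U,\xi)>0$.

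The only point needing a separate check is the region where $M(U,\xi)=0$, i.e.\ $(\xi-u)^2\geq 2gh$. There the claimed inequality reduces to $H_0(f,\xi)\geq \eta'(U)\kxi\,f$. I would observe that $(\xi-u)^2\geq 2gh$ rearranges into $\xi^2/2\geq gh+u\xi-u^2/2=\eta'(U)\kxi$, so that
\begin{equation*}
H_0(f,\xi)\geq\frac{\xi^2}{2}f\geq \eta'(U)\kxi\,f,\qquad f\geq 0,
\end{equation*}
which closes part (i). (Equivalently, the value $\eta'(U)\kxi$ lies in the subdifferential of $H_0(\cdot,\xi)$ at $0$, which is the half-line $[\xi^2/2,\infty)$.)

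For part (ii), the equality $\eta(U)=\int H_0(M(U,\xi),\xi)\,d\xi$ is just \eqref{eq:kinint} specialized to $z=0$, since $H(\cdot,\cdot,0)=H_0$. For the inequality, I would integrate \eqref{eq:subdiff} in $\xi$ against $f(\xi)\geq0$, use the assumed moment identities $\int \binom{1}{\xi}f(\xi)\,d\xi=U=\int \binom{1}{\xi}M(U,\xi)\,d\xi$ (the latter being \eqref{eq:kinmom}) to kill the linear term, and conclude
\begin{equation*}
\int_\r H_0\bigl(f(\xi),\xi\bigr)\,d\xi\geq \int_\r H_0\bigl(M(U,\xi),\xi\bigr)\,d\xi.
\end{equation*}
The only mild obstacle is justifying that the integrals make sense; this is standard since $M(U,\cdot)$ has compact support and the hypothesis that the moments of $f$ are finite ensures the linear terms on the right are integrable, while the inequality itself provides upper control of the cubic term on the left only through the finiteness assumption on $\int H_0(f,\xi)d\xi$ (otherwise the inequality is trivially true).
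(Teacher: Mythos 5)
Your argument is correct and follows essentially the same route as the paper: identify $\partial_f H_0(M(U,\xi),\xi)$ with $\eta'(U)\kxi$ on the support of $M$, handle the set $\{M=0\}$ via $\xi^2/2\geq\eta'(U)\kxi$ there, invoke convexity of $H_0$ in $f$, and integrate against $\xi$ using the moment identities for part (ii). The remark on integrability is a small addition the paper leaves implicit, but nothing substantive differs.
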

\begin{proof}
This approach by the subdifferential inequality has been introduced in \cite{BGKcons}.
The property (ii) easily follows from (i) by taking $f=f(\xi)$ and integrating \eqref{eq:subdiff}
with respect to $\xi$. For proving (i), notice first that
\begin{equation}
	\eta'(U)=\bigl(gh-u^2/2,u\bigr),
	\label{eq:etaprime}
\end{equation}
where prime denotes differentiation with respect to $U=(h,hu)^T$. Thus
\begin{equation}
	\eta'(U)\kxi
	=gh-u^2/2+\xi u=\frac{\xi^2}{2}+gh-\frac{(\xi-u)^2}{2}.
	\label{eq:etaprime1xi}
\end{equation}
Observe also that
\begin{equation}
	\partial_f H_0(f,\xi)=\frac{\xi^2}{2}+\frac{g^2\pi^2}{2}f^2.
	\label{eq:H0prime}
\end{equation}
The formula defining $M$ in \eqref{eq:kinmaxw} yields that
\begin{equation}
	gh-\frac{(\xi-u)^2}{2}=\left\{\begin{array}{l}
	\dsp \frac{g^2\pi^2}{2}M(U,\xi)^2\quad\mbox{if }M(U,\xi)>0,\\
	\dsp\mbox{is nonpositive\quad if }M(U,\xi)=0,
	\end{array}\right.
	\label{eq:idM}
\end{equation}
thus
\begin{equation}
	\partial_f H_0\bigl(M(U,\xi),\xi\bigr)=\left\{\begin{array}{l}
	\dsp \eta'(U)\kxi\quad\mbox{if }M(U,\xi)>0,\\
	\dsp\geq\eta'(U)\kxi \quad\mbox{if }M(U,\xi)=0.
	\end{array}\right.
	\label{eq:idH'}
\end{equation}
We conclude using the convexity of $H_0$ with respect to $f$ that
\begin{equation}\begin{array}{l}
	\dsp H_0(f,\xi)\geq H_0\bigl(M(U,\xi),\xi\bigr)+\partial_f H_0\bigl(M(U,\xi),\xi\bigr)\bigl(f-M(U,\xi)\bigr)\\
	\dsp \hphantom{H_0(f,\xi)}
	\geq H_0\bigl(M(U,\xi),\xi\bigr)+\eta'(U)\kxi
	\bigl(f-M(U,\xi)\bigr),
	\label{eq:convineqH_0}
	\end{array}
\end{equation}
which proves the claim.
\end{proof}

For numerical purposes it is usual to replace the right-hand side in the kinetic relaxation
equation \eqref{eq:kinrelax} by a time discrete projection to the Maxwellian state.
When space discretization is present it leads to flux-vector splitting schemes, see \cite{FVS}
for the case without topography, \cite{PS} for the case with
topography, and \cite{bristeau} for the 2d case on unstructured meshes.\\

Here we consider more general schemes. We would like to approximate the solution $U(t,x)$, $x \in \r$, $t
\geq 0$ of the system~\eqref{eq:SV} by discrete values $U_i^n$, $i \in
\mathbb{Z}$, $ n \in \mathbb{N}$. In order to do so, we consider a grid of points $x_{i+1/2}$, $i \in
\mathbb{Z}$,
$$\ldots < x_{i-1/2} < x_{i+1/2} <  x_{i+3/2} < \ldots,$$
and we define the cells (or finite volumes) and their lengths
$$C_i = ]x_{i-1/2},x_{i+1/2}[,\qquad \Delta x_i = x_{i+1/2} - x_{i-1/2}.$$
We consider discrete times $t^n$ with $t^{n+1}=t^n+\Delta t^n$, and
we define the piecewise constant functions $U^n(x)$ corresponding to time $t^n$ and $z(x)$ as
\begin{equation}
	U^n(x)= U^n_i,\quad z(x)= z_i,\quad\mbox{ for }x_{i-1/2}<x<x_{i+1/2}.
	\label{eq:U^npc}
\end{equation}
A finite volume scheme for solving~\eqref{eq:SV} is a formula of the form
\begin{equation}
	U^{n+1}_i=U^n_i-\sigma_i(F_{i+1/2-}-F_{i-1/2+}),
	\label{eq:upU0}
\end{equation}
where $\sigma_i=\Delta t^n/\Delta x_i$, telling how to compute the
values $U^{n+1}_i$ knowing $U_i^n$ and discretized values $z_i$ of the topography.
Here we consider first-order explicit three points schemes where
\begin{equation}
	F_{i+1/2-} = \F_l(U_i^n,U_{i+1}^n,z_{i+1}-z_i),\qquad F_{i+1/2+} = \F_r(U_i^n,U_{i+1}^n,z_{i+1}-z_i).
\label{eq:flux_def}
\end{equation}
The functions $\F_{l/r}(U_l,U_r,\Delta z)\in \r^2$ are the numerical fluxes, see \cite{bouchut_book}.\\

Indeed the method used in~\cite{PS} in order to solve~\eqref{eq:SV} can be viewed as solving
\begin{equation}
	\partial_t f+\xi\partial_x f-g(\partial_x z)\partial_\xi f=0
	\label{eq:kintrans}
\end{equation}
for the unknown $f(t,x,\xi)$, over the time interval $(t^n,t^{n+1})$,
with initial data
\begin{equation}
	f(t^n,x,\xi)=M(U^n(x),\xi).
	\label{eq:initM}
\end{equation}
Defining the update as
\begin{equation}
	U^{n+1}_i=\frac{1}{\Delta x_i}\int_{x_{i-1/2}}^{x_{i+1/2}}\int_\r \kxi f(t^{n+1-},x,\xi)\,dxd\xi,
	\label{eq:updatekin}
\end{equation}
and
\begin{equation}
	f^{n+1-}_i(\xi)=\frac{1}{\Delta x_i}\int_{x_{i-1/2}}^{x_{i+1/2}}f(t^{n+1-},x,\xi)\,dx,
	\label{eq:f_in+1-}
\end{equation} 
the formula \eqref{eq:updatekin} can then be written
\begin{equation}
	U^{n+1}_i=\int_\r \kxi f^{n+1-}_i(\xi)\,d\xi.
	\label{eq:Un+1-i}
\end{equation}
This formula can in fact be written under the form \eqref{eq:upU0}, \eqref{eq:flux_def}
for some numerical fluxes $\F_{l/r}$ computed in \cite{PS}, involving nonexplicit integrals.

A main idea in this paper is to use simplified formulas, and it will be done by defining
a suitable approximation of $f^{n+1-}_i(\xi)$.
We shall often denote $U_i$ instead of $U_i^n$, whenever there is no ambiguity.

\section{Kinetic entropy inequality without topography}
\label{sec:kin_without_topo}

In this section we consider the problem~\eqref{eq:SV} without topography,
and the unmodified kinetic scheme \eqref{eq:kintrans}, \eqref{eq:initM}, \eqref{eq:f_in+1-}, \eqref{eq:Un+1-i}.
This problem is classical, and we recall here how the entropy inequality is analyzed in this case,
in the fully discrete and semi-discrete cases.

\subsection{Fully discrete scheme}\label{full-discrete-notopo}

Without topography, the kinetic scheme is an entropy satisfying {\sl flux vector splitting} scheme \cite{FVS}.
The update \eqref{eq:f_in+1-} of the solution of~\eqref{eq:kintrans},\eqref{eq:initM} simplifies to the discrete kinetic scheme
\begin{equation}
	f_i^{n+1-}=M_i-\sigma_i\xi\Bigl(\1_{\xi>0}M_i+\1_{\xi<0}M_{i+1}
	-\1_{\xi<0}M_i-\1_{\xi>0}M_{i-1}\Bigr),
	\label{eq:kinsc}
\end{equation}
with $\sigma_i=\Delta t^n/\Delta x_i$ and with short notation (we omit the variable $\xi$).
One can write it
\begin{equation}
	f_i^{n+1-}=\left\{\begin{array}{ll}
	\dsp (1+\sigma_i\xi)M_i-\sigma_i\xi M_{i+1}&\mbox{ if }\xi<0,\\
	\dsp(1-\sigma_i\xi)M_i+\sigma_i\xi M_{i-1}&\mbox{ if }\xi>0.
\end{array}
\right.
	\label{eq:kinsconvex}
\end{equation}
Then under the CFL condition that
\begin{equation}
	\sigma_i|\xi|\leq 1 \mbox{ in the supports of }M_i, M_{i-1}, M_{i+1},
	\label{eq:CFL}
\end{equation}
$f_i^{n+1-}$ is a convex combination of $M_i$ and $M_{i+1}$ if $\xi<0$,
of $M_i$ and $M_{i-1}$ if  $\xi>0$. Thus $f_i^{n+1-}\geq 0$, and recalling the kinetic entropy $H_0(f,\xi)$ from \eqref{eq:kinH0},
we have
\begin{equation}
	H_0(f_i^{n+1-},\xi)\leq\left\{\begin{array}{ll}
	\dsp (1+\sigma_i\xi)H_0(M_i,\xi)-\sigma_i\xi H_0(M_{i+1},\xi)&\mbox{ if }\xi<0,\\
	\dsp(1-\sigma_i\xi)H_0(M_i,\xi)+\sigma_i\xi H_0(M_{i-1},\xi)&\mbox{ if }\xi>0.
\end{array}
\right.
	\label{eq:kinsconvexentro}
\end{equation}
This can be also written as
\begin{equation}\begin{array}{l}
	\dsp H_0(f_i^{n+1-},\xi)\leq H_0(M_i,\xi)-\sigma_i\xi\Bigl(\1_{\xi>0}H_0(M_i,\xi)+\1_{\xi<0}H_0(M_{i+1},\xi)\\
	\dsp\hphantom{H_0(M_i^{n+1-},\xi)\leq}
	-\1_{\xi<0}H_0(M_i,\xi)-\1_{\xi>0}H_0(M_{i-1},\xi)\Bigr),
	\end{array}
	\label{eq:kinentr}
\end{equation}
which can be interpreted as a conservative kinetic entropy inequality.
Note that with \eqref{eq:Un+1-i} and \eqref{eq:entropmini},
\begin{equation}
	\eta(U^{n+1}_i)\leq \int_\r\,H_0(f_i^{n+1-}(\xi),\xi)d\xi,
	\label{eq:entrnotopo}
\end{equation}
which by integration of \eqref{eq:kinentr} yields the macroscopic entropy inequality.

The scheme~\eqref{eq:kinsc} and the definition~\eqref{eq:Un+1-i} allow to complete the definition of the
macroscopic scheme~\eqref{eq:upU0}, \eqref{eq:flux_def} with the numerical flux $\F_l=\F_r\equiv\F$
given by the {\sl flux vector splitting} formula \cite{FVS}
\begin{equation}
	\F(U_l,U_r)=\int_{\xi>0}\xi \kxi M(U_l,\xi)\,d\xi+\int_{\xi<0}\xi\kxi M(U_r,\xi)\,d\xi,
	\label{eq:kinupwind}
\end{equation}
where $M$ is defined in \eqref{eq:kinmaxw}.

\subsection{Semi-discrete scheme}\label{semi-no-topo}

Assuming that the timestep is very small (i.e. $\sigma_i$ very small),
we have the linearized approximation of the entropy variation from \eqref{eq:kinsc}
\begin{equation}\begin{array}{l}
	\dsp H_0(f_i^{n+1-},\xi)\simeq H_0(M_i,\xi)-\sigma_i\xi \partial_f H_0(M_i,\xi)\Bigl(\1_{\xi>0}M_i+\1_{\xi<0}M_{i+1}\\
	\dsp\hphantom{H_0(M_i^{n+1-},\xi)\leq}
	-\1_{\xi<0}M_i-\1_{\xi>0}M_{i-1}\Bigr).
	\end{array}
	\label{eq:kinentrsemi}
\end{equation}
This linearization with respect to $\Delta t^n$
(or equivalently with respect to $\sigma_i=\Delta t^n/\Delta x_i$) represents indeed
the entropy in the semi-discrete limit $\Delta t^n\to 0$ (divide \eqref{eq:kinentrsemi}
by $\Delta t^n$ and let formally $\Delta t^n\to 0$).
The entropy inequality attached to this linearization can be estimated as follows.

\begin{lemma} The linearized term from \eqref{eq:kinentrsemi} is dominated by
the conservative difference from \eqref{eq:kinentr},
\begin{equation}\begin{array}{l}
	\dsp -\sigma_i\xi \partial_f H_0(M_i,\xi)\Bigl(\1_{\xi>0}M_i+\1_{\xi<0}M_{i+1}
	-\1_{\xi<0}M_i-\1_{\xi>0}M_{i-1}\Bigr)\\
	\dsp\leq -\sigma_i\xi\Bigl(\1_{\xi>0}H_0(M_i,\xi)+\1_{\xi<0}H_0(M_{i+1},\xi)\\
	\dsp\hphantom{H_0(M_i^{n+1-},\xi)\leq}
	-\1_{\xi<0}H_0(M_i,\xi)-\1_{\xi>0}H_0(M_{i-1},\xi)\Bigr).
	\label{eq:estsemi}
	\end{array}
\end{equation}
In particular, the semi-discrete scheme is more dissipative than the fully discrete scheme.
\label{lemma:seminotopo}
\end{lemma}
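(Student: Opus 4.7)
The plan is to prove the inequality \eqref{eq:estsemi} pointwise in $\xi$ by splitting on the sign of $\xi$ and invoking the convexity of $H_0(\cdot,\xi)$ in its first argument. Since the mapping $f\mapsto H_0(f,\xi)=\frac{\xi^2}{2}f+\frac{g^2\pi^2}{6}f^3$ is convex on $f\geq 0$, we have the standard tangent inequality
\begin{equation*}
H_0(b,\xi)\geq H_0(a,\xi)+\partial_f H_0(a,\xi)\,(b-a),\qquad a,b\geq 0,
\end{equation*}
which, after rearrangement, gives equivalently $\partial_f H_0(a,\xi)(a-b)\geq H_0(a,\xi)-H_0(b,\xi)$.

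In the case $\xi>0$ the indicator $\1_{\xi<0}$ vanishes and the claimed inequality reduces, after dividing by the positive factor $\sigma_i\xi$, to
\begin{equation*}
\partial_f H_0(M_i,\xi)\bigl(M_i-M_{i-1}\bigr)\geq H_0(M_i,\xi)-H_0(M_{i-1},\xi),
\end{equation*}
which is exactly the tangent inequality at $a=M_i$, $b=M_{i-1}$. In the case $\xi<0$ the indicator $\1_{\xi>0}$ vanishes and, dividing by the now negative factor $\sigma_i\xi$ (which reverses the inequality), the claim reduces to
\begin{equation*}
\partial_f H_0(M_i,\xi)\bigl(M_{i+1}-M_i\bigr)\leq H_0(M_{i+1},\xi)-H_0(M_i,\xi),
\end{equation*}
which is the same tangent inequality with $a=M_i$, $b=M_{i+1}$, rewritten. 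The case $\xi=0$ is trivial since both sides vanish. Combining the two cases yields \eqref{eq:estsemi}.

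For the concluding sentence on dissipation, I would observe that the right-hand side of \eqref{eq:estsemi} is precisely the conservative flux difference appearing in the fully discrete entropy inequality \eqref{eq:kinentr}, while the left-hand side is the increment $H_0(f_i^{n+1-},\xi)-H_0(M_i,\xi)$ used in the semi-discrete scheme \eqref{eq:kinentrsemi}. Since the fully discrete increment is bounded above by the conservative flux difference, and the semi-discrete increment is in turn dominated by that same bound but with a smaller (more negative) value, the entropy production of the semi-discrete scheme is at least as negative as that of the fully discrete one; after integration in $\xi$ and division by $\Delta t^n$, the same comparison carries over to the macroscopic entropy, proving that the semi-discrete scheme dissipates more.

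The only subtle point is the sign bookkeeping when multiplying through by $\sigma_i\xi$: the two cases $\xi>0$ and $\xi<0$ must be handled separately because this factor changes sign, and the matching pair of Maxwellians ($M_i,M_{i-1}$ versus $M_i,M_{i+1}$) in the selected indicator ensures that the convexity inequality is applied in the correct direction each time. No CFL condition or positivity of $f_i^{n+1-}$ enters here — only the convexity of $H_0$ in $f$ on $[0,\infty)$, which is immediate from the explicit form \eqref{eq:kinH0}.
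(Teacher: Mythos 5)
Your proof is correct and follows essentially the same route as the paper: split on the sign of $\xi$, cancel the sign factor $\sigma_i\xi$ with the appropriate reversal, and reduce both cases to the tangent-line (convexity) inequality for $H_0(\cdot,\xi)$ at $M_i$, exactly as in \eqref{eq:ineqconvloc}. The reduced inequalities you obtain are the same two that the paper derives, so no further comment is needed.
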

\begin{proof}
It is enough to prove two inequalities,
\begin{equation}
	\xi \partial_f H_0(M_i)(\1_{\xi>0}M_i+\1_{\xi<0}M_{i+1}-M_i)
	\geq \xi(\1_{\xi>0}H_0(M_i)+\1_{\xi<0}H_0(M_{i+1})-H_0(M_i))
	\label{eq:ineqlocconsl}
\end{equation}
and
\begin{equation}
	\xi \partial_f H_0(M_i)(\1_{\xi<0}M_i+\1_{\xi>0}M_{i-1}-M_i)
	\leq \xi(\1_{\xi<0}H_0(M_i)+\1_{\xi>0}H_0(M_{i-1})-H_0(M_i)).
	\label{eq:ineqlocconsr}
\end{equation}
We observe that \eqref{eq:ineqlocconsl} is trivial for $\xi>0$, and \eqref{eq:ineqlocconsr} is trivial for $\xi<0$.
The two conditions can therefore be written
\begin{equation}\begin{array}{l}
	\dsp \partial_f H_0(M_i)(M_{i+1}-M_i)\leq H_0(M_{i+1})-H_0(M_i)\quad\mbox{for }\xi<0,\\
	\dsp \partial_f H_0(M_i)(M_{i-1}-M_i)\leq H_0(M_{i-1})-H_0(M_i)\quad\mbox{for }\xi>0.
	\label{eq:ineqconvloc}
	\end{array}
\end{equation}
These last inequalities follow from the convexity of $H_0$.
\end{proof}

\section{Kinetic interpretation of the hydrostatic reconstruction scheme}
\label{sec:kin_HR}

The hydrostatic reconstruction scheme (HR scheme for short) for the Saint-Venant system
\eqref{eq:SV}, has been introduced in \cite{ABBKP}, and can be written as follows,
\begin{equation}
	U^{n+1}_i=U_i-\sigma_i(F_{i+1/2-}-F_{i-1/2+}),
	\label{eq:upU}
\end{equation}
where $\sigma_i=\Delta t^n/\Delta x_i$,
\begin{equation}\begin{array}{l}
	\dsp F_{i+1/2-}=\F(U_{i+1/2-},U_{i+1/2+})+\begin{pmatrix}0\\ g\frac{h_i^2}{2}-\frac{gh_{i+1/2-}^2}{2}\end{pmatrix},\\
	\dsp F_{i+1/2+}=\F(U_{i+1/2-},U_{i+1/2+})+\begin{pmatrix}0\\ g\frac{h_{i+1}^2}{2}-\frac{gh_{i+1/2+}^2}{2}\end{pmatrix},
	\label{eq:HRflux}
	\end{array}
\end{equation}
$\F$ is a numerical flux for the system without topography, and the reconstructed states
\begin{equation}
	U_{i+1/2-}=(h_{i+1/2-},h_{i+1/2-}u_i),\qquad
	U_{i+1/2+}=(h_{i+1/2+},h_{i+1/2+}u_{i+1}),
	\label{eq:lrstates}
\end{equation}
are defined by
\begin{equation}
	h_{i+1/2-}=(h_i+z_i-z_{i+1/2})_+,\qquad
	h_{i+1/2+}=(h_{i+1}+z_{i+1}-z_{i+1/2})_+,
	\label{eq:hlr}
\end{equation}
and
\begin{equation}
	z_{i+1/2}=\max(z_i,z_{i+1}).
	\label{eq:zstar}
\end{equation}

We would like here to propose a kinetic interpretation of the HR scheme, which means to
interpret the above numerical fluxes as averages with respect to the kinetic variable
of a scheme written on a kinetic function $f$.
More precisely, we would like to approximate the solution to \eqref{eq:kintrans} by a kinetic scheme
such that the associated macroscopic scheme is exactly \eqref{eq:upU}-\eqref{eq:HRflux}
with homogeneous numerical flux $\F$ given by~\eqref{eq:kinupwind}.
We denote $M_i=M(U_i,\xi)$, $M_{i+1/2\pm}=M(U_{i+1/2\pm},\xi)$, $f_i^{n+1-}=f_i^{n+1-}(\xi)$,
and we consider the scheme
\begin{equation}\begin{array}{l}
	\dsp f_i^{n+1-}=M_i-\sigma_i\biggl(\xi\1_{\xi<0}M_{i+1/2+}+\xi\1_{\xi>0}M_{i+1/2-}+\delta M_{i+1/2-}\\
	\dsp\mkern 160mu -\xi\1_{\xi>0}M_{i-1/2-}-\xi\1_{\xi<0}M_{i-1/2+}-\delta M_{i-1/2+}\biggr).
	\label{eq:kinups}
	\end{array}
\end{equation}
In this formula, $\delta M_{i+1/2\pm}$ depend on $\xi$, $U_i$, $U_{i+1}$, $\Delta z_{i+1/2}=z_{i+1}-z_i$,
and are assumed to satisfy the moment relations
\begin{equation}
	\int_\r\delta M_{i+1/2-}\,d\xi=0,\quad
	\int_\r\xi\,\delta M_{i+1/2-}\,d\xi=g\frac{h_i^2}{2}-g\frac{h_{i+1/2-}^2}{2},
	\label{eq:intdeltaM-}
\end{equation}
\begin{equation}
	\int_\r\delta M_{i-1/2+}\,d\xi=0,\quad
	\int_\r\xi\,\delta M_{i-1/2+}\,d\xi=g\frac{h_i^2}{2}-g\frac{h_{i-1/2+}^2}{2}.
	\label{eq:intdeltaM+}
\end{equation}
Using again~\eqref{eq:Un+1-i},
the integration of \eqref{eq:kinups} multiplied by $\kxi$
with respect to $\xi$ then gives the HR scheme \eqref{eq:upU}-\eqref{eq:HRflux}
with \eqref{eq:lrstates}-\eqref{eq:zstar}, \eqref{eq:kinupwind}.
Thus as announced, \eqref{eq:kinups} is a kinetic interpretation of the HR scheme.
The remainder of this section is devoted to its analysis.

\subsection{Analysis of the semi-discrete scheme}\label{semitopo}

Assuming that the timestep is very small (i.e. $\sigma_i$ very small),
we have the linearized approximation of the entropy variation from \eqref{eq:kinups},
\begin{equation}\begin{array}{l}
	\dsp H(f_i^{n+1-},z_i)\simeq H(M_i,z_i)-\sigma_i\partial_f H(M_i,z_i)\biggl(\xi\1_{\xi<0}M_{i+1/2+}+\xi\1_{\xi>0}M_{i+1/2-}\\
	\dsp\mkern 60mu +\delta M_{i+1/2-}-\xi\1_{\xi>0}M_{i-1/2-}-\xi\1_{\xi<0}M_{i-1/2+}-\delta M_{i-1/2+}\biggr),
	\label{eq:kinupsH}
	\end{array}
\end{equation}
where the kinetic entropy $H(f,\xi,z)$ is defined in \eqref{eq:kinH}.
As in Subsection \ref{semi-no-topo}, this linearization with respect to $\sigma_i=\Delta t^n/\Delta x_i$
represents indeed the entropy in the semi-discrete limit $\Delta t^n\to 0$.
Its dissipation can be estimated as follows.
\begin{proposition}\label{prop semi-discrete}
We assume that the extra variations $\delta M_{i+1/2\pm}$
satisfy \eqref{eq:intdeltaM-}, \eqref{eq:intdeltaM+}, and also
\begin{equation}
	M(U_i,\xi)=0\ \Rightarrow \delta M_{i+1/2-}(\xi)=0\mbox{ and }\delta M_{i-1/2+}(\xi)=0.
	\label{eq:support_deltai+1/2}
\end{equation}
Then the linearized term from \eqref{eq:kinupsH} is dominated by
a quasi-conservative difference,
\begin{equation}\begin{array}{l}
	\dsp \partial_f H(M_i,z_i)\biggl(\xi\1_{\xi<0}M_{i+1/2+}+\xi\1_{\xi>0}M_{i+1/2-}\\
	\dsp\mkern 60mu +\delta M_{i+1/2-}-\xi\1_{\xi>0}M_{i-1/2-}-\xi\1_{\xi<0}M_{i-1/2+}-\delta M_{i-1/2+}\biggr)\\
	\dsp\geq \widetilde H_{i+1/2-}-\widetilde H_{i-1/2+},
	\end{array}
	\label{eq:hquasidiff}
\end{equation}
where
\begin{equation}\begin{array}{l}
	\dsp\widetilde H_{i+1/2-}=\xi\1_{\xi<0}H(M_{i+1/2+},z_{i+1/2})+\xi\1_{\xi>0}H(M_{i+1/2-},z_{i+1/2})\\
	\dsp\hphantom{\widetilde H_{i+1/2-}=}+\xi H(M_i,z_i)-\xi H(M_{i+1/2-},z_{i+1/2})\vphantom{\Bigl|}\\
	\dsp\hphantom{\widetilde H_{i+1/2-}=}+\Bigl(\eta'(U_i)\kxi+gz_i\Bigr)
	\bigl(\xi M_{i+1/2-}-\xi M_i+\delta M_{i+1/2-}\bigr),
	\end{array}
	\label{eq:tildH-}
\end{equation}
\begin{equation}\begin{array}{l}
	\dsp\widetilde H_{i-1/2+}=\xi\1_{\xi<0}H(M_{i-1/2+},z_{i-1/2})+\xi\1_{\xi>0}H(M_{i-1/2-},z_{i-1/2})\\
	\dsp\hphantom{\widetilde H_{i+1/2-}=}+\xi H(M_i,z_i)-\xi H(M_{i-1/2+},z_{i-1/2})\vphantom{\Bigl|}\\
	\dsp\hphantom{\widetilde H_{i+1/2-}=}+\Bigl(\eta'(U_i)\kxi+gz_i\Bigr)
	\bigl(\xi M_{i-1/2+}-\xi M_i+\delta M_{i-1/2+}\bigr).
	\end{array}
	\label{eq:tildH+}
\end{equation}
Moreover, the integral with respect to $\xi$ of the last two lines of \eqref{eq:tildH-} (respectively 
of \eqref{eq:tildH+}) vanishes. In particular,
\begin{equation}
	\int_\r\bigl(\widetilde H_{i+1/2-}-\widetilde H_{i-1/2+}\bigr)\,d\xi=\widetilde G_{i+1/2}-\widetilde G_{i-1/2},
	\label{eq:diffkinH}
\end{equation}
with
\begin{equation}
	\widetilde G_{i+1/2}=\int_{\xi<0}\xi H(M_{i+1/2+},z_{i+1/2})\,d\xi+\int_{\xi>0}\xi H(M_{i+1/2-},z_{i+1/2})\,d\xi.
	\label{eq:kingtilde}
\end{equation}
\end{proposition}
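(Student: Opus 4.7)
The plan is to reduce \eqref{eq:hquasidiff} to pointwise-in-$\xi$ identities and convexity estimates, and to establish the integral statements by direct computation using the moment relations \eqref{eq:kinmom}, \eqref{eq:kinflint} together with \eqref{eq:intdeltaM-}, \eqref{eq:intdeltaM+}. First I would rewrite $\widetilde H_{i+1/2-}$ in the compact form
\[\widetilde H_{i+1/2-}=\xi H(M_i,z_i)+\xi\1_{\xi<0}\bigl[H(M_{i+1/2+},z_{i+1/2})-H(M_{i+1/2-},z_{i+1/2})\bigr]+A\cdot Y_{i+1/2-},\]
with $A=\eta'(U_i)\kxi+gz_i$ and $Y_{i+1/2-}=\xi M_{i+1/2-}-\xi M_i+\delta M_{i+1/2-}$, obtained by writing $\xi=\xi\1_{\xi>0}+\xi\1_{\xi<0}$; $\widetilde H_{i-1/2+}$ is rewritten analogously. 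After the $\xi H(M_i,z_i)$ contributions cancel in the difference, the gap between the two sides of \eqref{eq:hquasidiff} decomposes into three pieces, which I would handle in turn.

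The first piece, $[\partial_fH(M_i,z_i)-A]\cdot(Y_{i+1/2-}-Y_{i-1/2+})$, vanishes identically: by \eqref{eq:idH'} its first factor is zero on $\{M_i>0\}$, while on $\{M_i(\xi)=0\}$ the reconstructions $M_{i\pm 1/2\mp}$ vanish pointwise (their supports are contained in that of $M_i$ since $u_{i\pm 1/2\mp}=u_i$ and $h_{i\pm 1/2\mp}\leq h_i$ by \eqref{eq:hlr}), and $\delta M_{i+1/2-}$, $\delta M_{i-1/2+}$ vanish by \eqref{eq:support_deltai+1/2}, so the second factor is zero.

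The two remaining upwind pieces reduce to proving, for $\xi<0$,
\[\partial_fH(M_i,z_i)\,(M_{i+1/2+}-M_{i+1/2-})\leq H(M_{i+1/2+},z_{i+1/2})-H(M_{i+1/2-},z_{i+1/2}),\]
and a symmetric statement on the $i-1/2$ side for $\xi>0$. Substituting $H=H_0+gzf$ this becomes $C\cdot(M_{i+1/2+}-M_{i+1/2-})\leq H_0(M_{i+1/2+},\xi)-H_0(M_{i+1/2-},\xi)$ with $C:=\partial_fH_0(M_i,\xi)+g(z_i-z_{i+1/2})$. The crux of the proof is the algebraic identity $C\leq\partial_fH_0(M_{i+1/2-},\xi)$, with equality whenever $M_{i+1/2-}(\xi)>0$: using \eqref{eq:H0prime}, \eqref{eq:idM}, $u_{i+1/2-}=u_i$ and the reconstruction $h_{i+1/2-}=h_i+z_i-z_{i+1/2}$ (valid when $h_{i+1/2-}>0$) one obtains the equality, and a short case split on $h_{i+1/2-}=0$ versus $(\xi-u_i)^2>2gh_{i+1/2-}$ gives $C\leq\xi^2/2=\partial_fH_0(0,\xi)$ in the vanishing case. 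Convexity of $H_0$ then closes the inequality, either as the tangent inequality at $M_{i+1/2-}>0$ or, when $M_{i+1/2-}=0$, using the favourable sign $M_{i+1/2+}\geq 0$ and the tangent inequality at $0$. The hard part will be precisely this last sub-case---$M_{i+1/2-}(\xi)=0$ while $M_{i+1/2+}(\xi)>0$---since $C\leq\xi^2/2$ is then strict and one must exploit the sign of the jump.

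Finally, for the vanishing integral of the last two lines of \eqref{eq:tildH-}, I would expand $\int(\xi H(M_i,z_i)-\xi H(M_{i+1/2-},z_{i+1/2})+A\cdot Y_{i+1/2-})\,d\xi$ using \eqref{eq:kinmom}, \eqref{eq:kinflint} and \eqref{eq:intdeltaM-}, and exploit $u_{i+1/2-}=u_i$; every term collapses to a multiple of $h_{i+1/2-}(h_i+z_i-h_{i+1/2-}-z_{i+1/2})$, which vanishes in both cases of \eqref{eq:hlr}. The conservative form \eqref{eq:diffkinH}--\eqref{eq:kingtilde} then follows by integrating \eqref{eq:tildH-}, \eqref{eq:tildH+} in $\xi$: only the upwind fluxes $\xi\1_{\xi<0}H(M_{i+1/2+},z_{i+1/2})+\xi\1_{\xi>0}H(M_{i+1/2-},z_{i+1/2})$ survive, producing exactly $\widetilde G_{i+1/2}-\widetilde G_{i-1/2}$.
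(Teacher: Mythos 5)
Your proposal is correct and follows essentially the same route as the paper: the same splitting off of a term that vanishes by \eqref{eq:idH'} together with the support conditions \eqref{eq:support_deltai+1/2} and $h_{i\pm1/2\mp}\le h_i$, the same reduction to the two one-sided convexity inequalities such as $\partial_f H(M_i,z_i)(M_{i+1/2+}-M_{i+1/2-})\le H(M_{i+1/2+},z_{i+1/2})-H(M_{i+1/2-},z_{i+1/2})$, the same key monotonicity $\partial_f H(M_i,z_i)\le \partial_f H(M_{i+1/2-},z_{i+1/2})$ (your constant $C$) with the degenerate case $M_{i+1/2-}=0$ handled by the sign of $M_{i+1/2+}$, and the same direct computation showing the integral of the last two lines collapses to a multiple of $h_{i+1/2-}(h_i+z_i-z_{i+1/2}-h_{i+1/2-})=0$.
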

\begin{proof}
The value of the integral with respect to $\xi$ of the two last lines of \eqref{eq:tildH-} is
\begin{equation}\begin{array}{l}
	\dsp\hphantom{=} \bigl(h_i\frac{u_i^2}{2}+gh_i^2+gh_iz_i\bigr)u_i-\bigl(h_{i+1/2-}\frac{u_i^2}{2}+gh_{i+1/2-}^2+gh_{i+1/2-}z_{i+1/2}\bigr)u_i\\
	\dsp\hphantom{=}+(gh_i+gz_i-u_i^2/2)u_i(h_{i+1/2-}-h_i)+u_i^3(h_{i+1/2-}-h_i)\\
	\dsp =u_igh_{i+1/2-}(-h_{i+1/2-}-z_{i+1/2}+z_i+h_i)\\
	\dsp =0,
	\label{eq:compintlines}
	\end{array}
\end{equation}
because of the definition of $h_{i+1/2-}$ in \eqref{eq:hlr}.
The computation for \eqref{eq:tildH+}) is similar.
In order to prove \eqref{eq:hquasidiff}, it is enough to prove the two inequalities
\begin{equation}\begin{array}{l}
	\dsp \partial_f H(M_i,z_i)\biggl(\xi\1_{\xi<0}M_{i+1/2+}+\xi\1_{\xi>0}M_{i+1/2-}+\delta M_{i+1/2-}-\xi M_i\biggr)\\
	\dsp \geq \widetilde H_{i+1/2-}-\xi H(M_i,z_i),
	\end{array}
	\label{eq:ineqeleft}
\end{equation}
and
\begin{equation}\begin{array}{l}
	\dsp \partial_f H(M_i,z_i)\biggl(\xi\1_{\xi>0}M_{i-1/2-}+\xi\1_{\xi<0}M_{i-1/2+}+\delta M_{i-1/2+}-\xi M_i\biggr)\\
	\dsp \leq\widetilde H_{i-1/2+}-\xi H(M_i,z_i).
	\end{array}
	\label{eq:ineqeright}
\end{equation}
We note that the definitions of $h_{i+1/2\pm}$ in \eqref{eq:hlr}-\eqref{eq:zstar} ensure
that $h_{i+1/2-}\leq h_i$, and $h_{i+1/2+}\leq h_{i+1}$.
Therefore,  because of \eqref{eq:kinmaxw} one has
\begin{equation}
	0\leq M_{i+1/2-}\leq M_i,\quad 0\leq M_{i+1/2+}\leq M_{i+1},
	\label{eq:domM_i+1/2}
\end{equation}
thus
\begin{equation}
	M(U_i,\xi)=0\ \Rightarrow M(U_{i+1/2-},\xi)=0\mbox{ and }M(U_{i-1/2+},\xi)=0.
	\label{eq:supporti+1/2}
\end{equation}
Taking into account \eqref{eq:support_deltai+1/2}, with \eqref{eq:idH'} we get
\begin{equation}\begin{array}{l}
	\dsp\hphantom{=} \Bigl(\eta'(U_i)\kxi+gz_i\Bigr)\bigl(\xi M_{i+1/2-}-\xi M_i+\delta M_{i+1/2-}\bigr)\\
	\dsp=\partial_f H(M_i,z_i)\bigl(\xi M_{i+1/2-}-\xi M_i+\delta M_{i+1/2-}\bigr),
	\label{eq:iidleft}
	\end{array}
\end{equation}
and
\begin{equation}\begin{array}{l}
	\dsp\hphantom{=}  \Bigl(\eta'(U_i)\kxi+gz_i\Bigr)\bigl(\xi M_{i-1/2+}-\xi M_i+\delta M_{i-1/2+}\bigr)\\
	\dsp =\partial_f H(M_i,z_i)\bigl(\xi M_{i-1/2+}-\xi M_i+\delta M_{i-1/2+}\bigr).
	\label{eq:eq:idright}
	\end{array}
\end{equation}
Therefore, the inequalities \eqref{eq:ineqeleft}-\eqref{eq:ineqeright} simplify to
\begin{equation}\begin{array}{l}
	\dsp \partial_f H(M_i,z_i)\biggl(\xi\1_{\xi<0}M_{i+1/2+}+\xi\1_{\xi>0}M_{i+1/2-}-\xi M_{i+1/2-}\biggr)\\
	\dsp \geq \xi\1_{\xi<0}H(M_{i+1/2+},z_{i+1/2})+\xi\1_{\xi>0}H(M_{i+1/2-},z_{i+1/2})
	-\xi H(M_{i+1/2-},z_{i+1/2})\vphantom{\Bigl|},
	\end{array}
	\label{eq:ineqeleftb}
\end{equation}
\begin{equation}\begin{array}{l}
	\dsp \partial_f H(M_i,z_i)\biggl(\xi\1_{\xi>0}M_{i-1/2-}+\xi\1_{\xi<0}M_{i-1/2+}-\xi M_{i-1/2+}\biggr)\\
	\dsp \leq\xi\1_{\xi<0}H(M_{i-1/2+},z_{i-1/2})+\xi\1_{\xi>0}H(M_{i-1/2-},z_{i-1/2})
	-\xi H(M_{i-1/2+},z_{i-1/2})\vphantom{\Bigl|}.
	\end{array}
	\label{eq:ineqerightb}
\end{equation}
The first inequality \eqref{eq:ineqeleftb} is trivial for $\xi>0$,
and the second inequality \eqref{eq:ineqerightb} is trivial for $\xi<0$.
Therefore it is enough to satisfy the two inequalities
\begin{equation}
	\partial_f H(M_i,z_i)\Bigl(M_{i+1/2+}-M_{i+1/2-}\Bigr)
	\leq H(M_{i+1/2+},z_{i+1/2})-H(M_{i+1/2-},z_{i+1/2}),
	\label{eq:ineqsubconvleft}
\end{equation}
\begin{equation}
	\partial_f H(M_i,z_i)\Bigl(M_{i-1/2-}-M_{i-1/2+}\Bigr)
	\leq H(M_{i-1/2-},z_{i-1/2})-H(M_{i-1/2+},z_{i-1/2}).
	\label{eq:ineqsubconveright}
\end{equation}
But as in Subsection \ref{semi-no-topo}, we have according to the convexity of $H$ with
respect to $f$,
\begin{equation}\begin{array}{l}
	\dsp H(M_{i+1/2+},z_{i+1/2})\geq H(M_{i+1/2-},z_{i+1/2})\\
	\dsp\mkern 200mu
	+\partial_f H(M_{i+1/2-},z_{i+1/2})(M_{i+1/2+}-M_{i+1/2-}),
	\label{eq:convleft}
	\end{array}
\end{equation}
\begin{equation}\begin{array}{l}
	\dsp H(M_{i-1/2-},z_{i-1/2})\geq H(M_{i-1/2+},z_{i-1/2})\\
	\dsp\mkern 200mu
	+\partial_f H(M_{i-1/2+},z_{i-1/2})(M_{i-1/2-}-M_{i-1/2+}).
	\label{eq:convright}
	\end{array}
\end{equation}
In order to prove \eqref{eq:ineqsubconvleft}, we observe that if $M_i(\xi)=0$ then
$M_{i+1/2-}(\xi)=0$ also, thus $\partial_f H(M_{i+1/2-},z_{i+1/2})-\partial_f H(M_i,z_i)=g(z_{i+1/2}-z_i)\geq 0$
because of \eqref{eq:zstar},  and
the inequality \eqref{eq:ineqsubconvleft} follows from \eqref{eq:convleft}.
Next, if $M_i(\xi)>0$, one has
\begin{equation}\begin{array}{l}
	\dsp \hphantom{=}\partial_f H(M_i,z_i)(M_{i+1/2+}-M_{i+1/2-})\\
	\dsp =\bigl(\eta'(U_i)\kxi+gz_i\bigr)(M_{i+1/2+}-M_{i+1/2-}),
	\label{eq:idprimeleft}
	\end{array}
\end{equation}
and as in \eqref{eq:convineqH_0}
\begin{equation}\begin{array}{l}
	\dsp\hphantom{\geq} \partial_f H(M_{i+1/2-},z_{i+1/2})(M_{i+1/2+}-M_{i+1/2-})\\
	\dsp\geq \bigl(\eta'(U_{i+1/2-})\kxi+gz_{i+1/2}\bigr)(M_{i+1/2+}-M_{i+1/2-}).
	\label{eq:leftpos}
	\end{array}
\end{equation}
Taking the difference between \eqref{eq:leftpos} and \eqref{eq:idprimeleft}, we obtain
\begin{equation}\begin{array}{l}
	\dsp \hphantom{\geq}\partial_f H(M_{i+1/2-},z_{i+1/2})(M_{i+1/2+}-M_{i+1/2-})-\partial_f H(M_i,z_i)(M_{i+1/2+}-M_{i+1/2-})\\
	\dsp\geq\bigl(gh_{i+1/2-}-gh_i+gz_{i+1/2}-gz_i\bigr)(M_{i+1/2+}-M_{i+1/2-})\geq 0,
	\end{array}
	\label{eq:keyleft}
\end{equation}
because of the definition \eqref{eq:hlr} of $h_{i+1/2-}$.
Therefore we conclude that in any case ($M_i(\xi)$ being zero or not),
one has
\begin{equation}\begin{array}{l}
	\dsp \partial_f H(M_i,z_i)(M_{i+1/2+}-M_{i+1/2-})
	-H(M_{i+1/2+},z_{i+1/2})+H(M_{i+1/2-},z_{i+1/2})\\
	\leq H(M_{i+1/2-},z_{i+1/2})-H(M_{i+1/2+},z_{i+1/2})\\
	\dsp\mkern 100mu
	+\partial_f H(M_{i+1/2-},z_{i+1/2})(M_{i+1/2+}-M_{i+1/2-})\\
	\leq 0
	\label{eq:estdissleft}
	\end{array}
\end{equation}
because of \eqref{eq:convleft}, and this proves \eqref{eq:ineqsubconvleft}.
Similarly one gets
\begin{equation}\begin{array}{l}
	\dsp \partial_f H(M_i,z_i)(M_{i-1/2-}-M_{i-1/2+})
	-H(M_{i-1/2-},z_{i-1/2})+H(M_{i-1/2+},z_{i-1/2})\\
	\leq H(M_{i-1/2+},z_{i-1/2})-H(M_{i-1/2-},z_{i-1/2})\\
	\dsp\mkern 100mu
	+\partial_f H(M_{i-1/2+},z_{i-1/2})(M_{i-1/2-}-M_{i-1/2+})\\
	\leq 0,
	\label{eq:estdissright}
	\end{array}
\end{equation}
proving \eqref{eq:ineqsubconveright}. This concludes the proof,
and we observe that we have indeed a dissipation estimate slightly stronger than \eqref{eq:hquasidiff},
\begin{equation}\begin{array}{l}
	\dsp \partial_f H(M_i,z_i)\biggl(\xi\1_{\xi<0}M_{i+1/2+}+\xi\1_{\xi>0}M_{i+1/2-}\\
	\dsp\mkern 60mu +\delta M_{i+1/2-}-\xi\1_{\xi>0}M_{i-1/2-}-\xi\1_{\xi<0}M_{i-1/2+}-\delta M_{i-1/2+}\biggr)\\
	\dsp\geq \widetilde H_{i+1/2-}-\widetilde H_{i-1/2+}\\
	\dsp -\xi\1_{\xi<0}\Bigl(H(M_{i+1/2+},z_{i+1/2})-H(M_{i+1/2-},z_{i+1/2})\\
	\dsp\mkern 100mu
	-\partial_f H(M_{i+1/2-},z_{i+1/2})(M_{i+1/2+}-M_{i+1/2-})\Bigr)\\
	\dsp +\xi\1_{\xi>0}\Bigl(H(M_{i-1/2-},z_{i-1/2})-H(M_{i-1/2+},z_{i-1/2})\\
	\dsp\mkern 100mu
	-\partial_f H(M_{i-1/2+},z_{i-1/2})(M_{i-1/2-}-M_{i-1/2+})\Bigr).
	\end{array}
	\label{eq:hquasidiffdiss}
\end{equation}
\end{proof}
\begin{remark}
The numerical entropy flux \eqref{eq:kingtilde} can be written
\begin{equation}
	\widetilde G_{i+1/2}={\cal G}(U_{i+1/2-},U_{i+1/2+})+gz_{i+1/2}\F^0(U_{i+1/2-},U_{i+1/2+}),
	\label{eq:numentrflux}
\end{equation}
where ${\cal G}$ is the numerical entropy flux of the scheme without topography, and $\F^0$ is
the first component of $\F$. This formula is in accordance of the analysis of the semi-discrete
entropy inequality in \cite{ABBKP}.
\end{remark}
\begin{remark}
At the kinetic level, the entropy inequality \eqref{eq:hquasidiff} is not in conservative form.
The entropy inequality becomes conservative only when taking the integral with respect to $\xi$,
as is seen on \eqref{eq:diffkinH}. This is also the case in \cite{PS}.
Indeed we have written the macroscopic conservative entropy inequality as an integral with respect to $\xi$
of the sum of a nonpositive term (the one in \eqref{eq:hquasidiff}),
a kinetic conservative term (the difference of the first lines of \eqref{eq:tildH-} and  \eqref{eq:tildH+}),
and a term with vanishing integral (difference of the two last lines of \eqref{eq:tildH-} and  \eqref{eq:tildH+}).
However, such a decomposition is not unique.
\end{remark}

\subsection{Analysis of the fully discrete scheme}

We still consider the scheme \eqref{eq:kinups}, and we make the choice
\begin{equation}\begin{array}{l}
	\dsp \delta M_{i+1/2-}=(\xi-u_i)(M_i-M_{i+1/2-}),\\
	\dsp \delta M_{i-1/2+}=(\xi-u_i)(M_i-M_{i-1/2+}),
	\label{eq:defdeltaM}
	\end{array}
\end{equation}
that satisfies the assumptions \eqref{eq:intdeltaM-}, \eqref{eq:intdeltaM+}
and \eqref{eq:support_deltai+1/2}.
The scheme \eqref{eq:kinups} is therefore a kinetic interpretation of the HR scheme
\eqref{eq:upU}-\eqref{eq:zstar}.
\begin{lemma}\label{kwb}
The scheme \eqref{eq:kinups} with the choice \eqref{eq:defdeltaM} is ``kinetic well-balanced''
for steady states at rest, and consistent with \eqref{eq:kintrans}.
\end{lemma}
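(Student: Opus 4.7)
The plan is to verify the two claims of the lemma separately. For the kinetic well-balanced property, I would substitute a rest state $u_j\equiv 0$, $h_j+z_j\equiv H$ directly into \eqref{eq:kinups}. Since $z_{i+1/2}=\max(z_i,z_{i+1})\leq H$, the positive parts in \eqref{eq:hlr} are inactive and yield $h_{i+1/2-}=h_{i+1/2+}=H-z_{i+1/2}$; the same holds at $i-1/2$. Combined with the vanishing velocities this forces $M_{i+1/2-}=M_{i+1/2+}$ and $M_{i-1/2-}=M_{i-1/2+}$, so the two upwind brackets in \eqref{eq:kinups} collapse to $\xi M_{i+1/2-}$ and $\xi M_{i-1/2+}$ respectively. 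With $u_i=0$ the definitions \eqref{eq:defdeltaM} reduce to $\delta M_{i+1/2-}=\xi(M_i-M_{i+1/2-})$ and $\delta M_{i-1/2+}=\xi(M_i-M_{i-1/2+})$, so each side telescopes to $\xi M_i$; the whole bracket vanishes and $f_i^{n+1-}=M_i$, which is the kinetic well-balanced property.

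For the consistency claim I would plug smooth fields $U(x)$, $z(x)$ into \eqref{eq:kinups}, divide by $\Delta t^n$, and expand to first order in $\Delta x$. The upwind convective part $\xi\1_{\xi<0}M_{i+1/2+}+\xi\1_{\xi>0}M_{i+1/2-}$ equals $\xi M(U(x_{i+1/2}),\xi)+O(\Delta x)$ because $h_{i+1/2\pm}$ differs from $h(x_{i+1/2})$ only by $O(\Delta x)$ by \eqref{eq:hlr}; the corresponding cell difference divided by $\Delta x_i$ thus produces the $\xi\partial_x M$ term of \eqref{eq:kintrans}. For the source piece I would use $h_i-h_{i+1/2-}=z_{i+1/2}-z_i$ (when the positive part is active) to expand \eqref{eq:defdeltaM}:
\[
\delta M_{i+1/2-}\simeq (\xi-u_i)\,\partial_h M(U_i,\xi)\,(z_{i+1/2}-z_i),
\]
and analogously for $\delta M_{i-1/2+}$, so that dividing the difference by $\Delta x_i$ approximates $(\xi-u_i)\,\partial_h M(U_i,\xi)\,\partial_x z+O(\Delta x)$.

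The algebraic heart of the consistency step is the identity $(\xi-u)\,\partial_h M=-g\,\partial_\xi M$, which one obtains by differentiating $(g\pi M)^2=2gh-(\xi-u)^2$ separately in $h$ and in $\xi$ on the support of $M$. It converts the source approximation above into precisely the missing $-g(\partial_x z)\,\partial_\xi M$ term of \eqref{eq:kintrans}. The main obstacle I foresee is the asymmetry introduced by $z_{i+1/2}=\max(z_i,z_{i+1})$: depending on the local sign of $\partial_x z$, only one of the two increments $z_{i+1/2}-z_i$ and $z_{i-1/2}-z_i$ is nonzero at first order (the other vanishes because the positive part in \eqref{eq:hlr} is not triggered). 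The bookkeeping must therefore be done case by case, but in both cases the telescoped quantity $z_{i+1/2}-z_{i-1/2}$ still approximates $\Delta x_i\,\partial_x z$ to first order, and the support condition \eqref{eq:support_deltai+1/2}, automatically ensured by \eqref{eq:defdeltaM}, guarantees that $\partial_h M$ is invoked only where $M>0$, so the identity remains meaningful pointwise and the argument closes.
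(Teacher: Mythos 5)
Your proposal is correct and follows essentially the same route as the paper: the well-balanced part is the identical telescoping of the upwind terms with $\delta M_{i\pm 1/2\mp}$ at rest, and the consistency part rests on the same first-order expansion of $M_{i+1/2-}$ in $h$ together with the identity $(\xi-u)\partial_h M=-g\partial_\xi M$, which is exactly the content of the paper's formulas \eqref{eq:vlasov} and \eqref{eq:dhM}. Your extra remarks on the $\max$ in \eqref{eq:zstar} and on the support of $M$ correspond to the paper's ``shift of index'' comment and its caveat that \eqref{eq:vlasov} holds in $L^1(\xi\in\r)$, so nothing essential is added or missing.
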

\begin{proof}
The expression kinetic well-balanced means that we do not
only prove that
\begin{equation}
	\int_\r\kxi f^{n+1-}_i\, d\xi = \int_\r \kxi M_i\, d\xi,
	\label{eq:wbmacro}
\end{equation}
at rest, but the stronger property
\begin{equation}
f_{i}^{n+1-}(\xi) = M_{i}(\xi),\quad \forall\xi\in\r,
\label{eq:rest_kin}
\end{equation}
when $u_i=0$ and $h_i + z_{i} =h_{i+1} + z_{i+1}$ for all $i$.
Indeed in this situation one has $U_{i+1/2-}=U_{i+1/2+}$ for all $i$,
thus the first three terms between parentheses in \eqref{eq:kinups} give $\xi M_i$,
and the last three terms give $-\xi M_i$, leading to \eqref{eq:rest_kin}.

The consistency of the HR scheme has been proved in \cite{ABBKP}, but here the statement
is the consistency of the kinetic update \eqref{eq:kinups} with the kinetic equation \eqref{eq:kintrans}.
We proceed as follows. Using~\eqref{eq:initM} and \eqref{eq:kinmaxw},
the topography source term in~\eqref{eq:kintrans} reads
\begin{equation}
-g (\partial_x z) \partial_\xi M = g (\partial_x z) \frac{\xi
-u}{2gh - (\xi - u)^2}M.
\label{eq:vlasov}
\end{equation}
This formula is valid for $2gh - (\xi - u)^2\not=0$, i.e. when $\xi \neq u \pm \sqrt{2gh}$ or in $L^1(\xi\in\r)$.
Assuming that $h_i>0$ (otherwise the consistency is obvious),
one has that $h_{i+1/2-} = h_i + z_i - z_{i+1/2}$ for $z_{i+1}-z_i$ small enough,
and an asymptotic expansion of $M_{i+1/2-}$ gives
\begin{equation}
M_{i+1/2-} = M_i + (z_i - z_{i+1/2})(\partial_{h_i} M_i)_{|u_i} + o (z_{i+1} -z_{i}),
\label{eq:expand}
\end{equation}
with
\begin{equation}
	(\partial_{h_i} M_i)_{|u_i}= g \frac{M_i}{2gh_i - (\xi - u_i)^2}.
	\label{eq:dhM}
\end{equation}
Thus
\begin{equation}
	\frac{\delta M_{i+1/2-}}{\Delta x_i}
	=  g \frac{z_{i+1/2} -z_i}{\Delta x_i}\frac{\xi - u_i}{2gh_i - (\xi - u_i)^2} M_i+ o(1).
	\label{eq:expandbis}
\end{equation}
Similarly, one has
\begin{equation}
	\frac{\delta M_{i-1/2+}}{\Delta x_i}
	=  g \frac{z_{i-1/2} -z_i}{\Delta x_i}\frac{\xi - u_i}{2gh_i - (\xi - u_i)^2} M_i+ o(1).
	\label{eq:expandbis+}
\end{equation}
With the usual shift of index $i$ due to the distribution of the source to interfaces,
the difference \eqref{eq:expandbis} minus \eqref{eq:expandbis+}
appears as a discrete version of~\eqref{eq:vlasov}.
The other four terms in parentheses in \eqref{eq:kinups} are conservative,
and are classically consistent with $\xi\partial_x f$ in \eqref{eq:kintrans}.
\end{proof}
\begin{remark} The scheme \eqref{eq:kinups} can be viewed as a consistent well-balanced
scheme for \eqref{eq:kintrans}, except that the notion of consistency is true here only for
Maxwellian initial data. On the contrary, the exact solution used in \cite{PS} is consistent for
initial data of arbitrary shape. The role of the special form of the Maxwellian \eqref{eq:kinmaxw}
is seen here by the fact that for initial data $U_i$ at rest, one has that $M(U_i,\xi)$ is a
steady state of \eqref{eq:kintrans} (this results from \eqref{eq:vlasov} and \eqref{eq:dhM}).
\end{remark}

When writing the entropy inequality for the fully discrete scheme, the difficulty is to estimate
the positive part of the entropy dissipation by something that tends to zero
when $\Delta x_i$ tends to zero, at constant Courant number $\sigma_i$,
and assuming only that $\Delta z/\Delta x$ is bounded (Lipschitz topography), but not
that $\Delta U/\Delta x$ is bounded (the solution can have discontinuities).
Here $\Delta z$ stands for a quantity like $z_{i+1}-z_i$, and $\Delta U$ stands
for a quantity like $U_{i+1}-U_i$.

The principle of proof of such entropy inequality is that we use the dissipation of the
semi-discrete scheme proved in Proposition \ref{prop semi-discrete}, under the strong form \eqref{eq:hquasidiffdiss}.
This inequality involves the terms linear in $\sigma_i$. Under a CFL condition,
the higher order terms (quadratic in $\sigma_i$ or higher) are either treated
as errors if they are of the order of $\Delta z^2$ or $\Delta z\Delta U$,
or must be dominated by the dissipation if they are of the order of $\Delta U^2$.
Note that the dissipation in \eqref{eq:hquasidiffdiss}, i.e. the two last expressions in factor of
$\1_{\xi<0}$ and $\1_{\xi>0}$ respectively, are of the order of $(M_{i+1/2+}-M_{i+1/2-})^2$
and $(M_{i-1/2+}-M_{i-1/2-})^2$ respectively, and thus neglecting the terms in $\Delta z$,
they control $(M_{i+1}-M_{i})^2$ and $(M_{i}-M_{i-1})^2$ respectively.
However, the Maxwellian \eqref{eq:kinmaxw} is not Lipschitz continuous with respect to $U$,
thus a sharp analysis has to be performed in order to use the dissipation.\\

We consider a velocity $v_m\geq 0$ such that for all $i$,
\begin{equation}
	M(U_i,\xi)>0\Rightarrow |\xi|\leq  v_m.
	\label{eq:vmax}
\end{equation}
This means equivalently that $|u_i|+\sqrt{2gh_i}\leq v_m$.
We consider a CFL condition strictly less than one,
\begin{equation}
	\sigma_iv_m\leq \beta<1 \quad \mbox{ for all }i,
	\label{eq:CFLfull}
\end{equation}
where $\sigma_i=\Delta t^n/\Delta x_i$, and $\beta$ is a given constant.
\begin{theorem}\label{thentrfull_JSM}
Under the CFL condition~\eqref{eq:CFLfull}, the scheme \eqref{eq:kinups} with the choice \eqref{eq:defdeltaM} verifies the following properties.

\noindent (i) The kinetic function remains nonnegative $f^{n+1-}_i\geq 0$.

\noindent (ii) One has the kinetic entropy inequality
\begin{equation}\begin{array}{l}
	\dsp\hphantom{\leq}\ H(f_i^{n+1-},z_i)\\
	\dsp\leq H(M_i,z_i)-\sigma_i\Bigl(\widetilde{H}_{i+1/2-} -
   \widetilde{H}_{i-1/2+}\Bigr)\\
	\dsp\hphantom{\leq}-\nu_\beta\,\sigma_i|\xi|\frac{g^2\pi^2}{6}
	\biggl(\1_{\xi<0}\,(M_{i+1/2+}+M_{i+1/2-})(M_{i+1/2+} -  M_{i+1/2-})^2\\
	\dsp\hphantom{\leq}+\1_{\xi>0}\,(M_{i-1/2-}+M_{i-1/2+})(M_{i-1/2+} -  M_{i-1/2-})^2\biggr)\\
	\dsp\hphantom{\leq}+C_\beta(\sigma_iv_m)^2\frac{g^2 \pi^2}{6}
	M_i\Bigl((M_i-M_{i+1/2-})^2+(M_i-M_{i-1/2+})^2\Bigr),
	\label{eq:entrfullystat}
   \end{array}
\end{equation}
where $\widetilde{H}_{i+1/2-}$, $\widetilde{H}_{i-1/2+}$ are defined
by~\eqref{eq:tildH-},\eqref{eq:tildH+}, $\nu_\beta>0$ is a dissipation constant
depending only on $\beta$, and $C_\beta\geq 0$ is a constant depending only on $\beta$.
The term proportional to $C_\beta$ is an error, while the term
proportional to $\nu_\beta$ is a dissipation that reinforces the inequality.
\end{theorem}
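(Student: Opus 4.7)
My plan is to prove the theorem in two stages. For part~(i) I will rearrange $f_i^{n+1-}$ into a form whose nonnegativity follows from a short case analysis under the CFL. For part~(ii) I will perform an exact third-order Taylor expansion of $H(\cdot,z_i)$ around $M_i$, apply Proposition~\ref{prop semi-discrete} in its strong form~\eqref{eq:hquasidiffdiss} to the first-order-in-$\sigma_i$ piece, and route the quadratic and cubic remainders partly into the $C_\beta$ error and partly into a fraction of the available Bregman dissipation.

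\textbf{Nonnegativity.} Substituting \eqref{eq:defdeltaM} into \eqref{eq:kinups} and treating $\xi>0$ and $\xi<0$ separately, cancellations yield, for $\xi>0$,
\[
f_i^{n+1-} = (1-\sigma_i\xi)M_i + \sigma_i\xi M_{i-1/2-} + \sigma_i u_i(M_i-M_{i+1/2-}) + \sigma_i(\xi-u_i)(M_i-M_{i-1/2+}),
\]
and an analogous formula for $\xi<0$. A case analysis on the signs of $u_i$ and $\xi-u_i$, using $M_{i\pm 1/2\mp}\leq M_i$ from~\eqref{eq:domM_i+1/2} and the CFL bounds $\sigma_i|u_i|,\sigma_i|\xi|,\sigma_i|\xi-u_i|\leq\beta<1$ valid on the support of $M_i$, yields $f_i^{n+1-}\geq 0$; outside the support of $M_i$ the support inclusion~\eqref{eq:supporti+1/2} reduces $f_i^{n+1-}$ to a single upwind term of the form $\sigma_i|\xi|M_{i\mp1/2\pm}\geq 0$.

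\textbf{Exact expansion and remainders.} Writing $f_i^{n+1-}=M_i-\sigma_iS_i$, the cubic-in-$f$ structure of $H(f,z_i)=\tfrac{\xi^2}{2}f+\tfrac{g^2\pi^2}{6}f^3+gz_if$ makes the identity
\[
H(f_i^{n+1-},z_i) = H(M_i,z_i) - \sigma_i\partial_fH(M_i,z_i)S_i + \tfrac{g^2\pi^2}{2}M_i\sigma_i^2S_i^2 - \tfrac{g^2\pi^2}{6}\sigma_i^3S_i^3
\]
exact. I apply Proposition~\ref{prop semi-discrete} in its strong form~\eqref{eq:hquasidiffdiss} to the linear term, obtaining $-\sigma_i(\widetilde H_{i+1/2-}-\widetilde H_{i-1/2+})$ minus a Bregman dissipation $\sigma_iD_i\geq 0$ composed of $\sigma_i|\xi|\tfrac{g^2\pi^2}{6}(M_{j+}-M_{j-})^2(M_{j+}+2M_{j-})$ at each interface $j=i\pm 1/2$. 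For the $\sigma_i^2,\sigma_i^3$ remainders I use the regrouping $S_i=u_i(M_{i+1/2-}-M_{i-1/2-})+(\xi-u_i)(M_{i-1/2+}-M_{i-1/2-})$ for $\xi>0$ (and the symmetric formula for $\xi<0$) together with the telescoping chain $M_{i+1/2-}-M_{i-1/2-}=-(M_i-M_{i+1/2-})+(M_i-M_{i-1/2+})+(M_{i-1/2+}-M_{i-1/2-})$ and $(a+b+c)^2\leq 3(a^2+b^2+c^2)$. The resulting squared differences split into (a) \emph{cell-interior} terms $(M_i-M_{i+1/2-})^2,(M_i-M_{i-1/2+})^2$ with prefactor bounded by $(\sigma_iv_m)^2$, which form the $C_\beta$ error of~\eqref{eq:entrfullystat}, and (b) \emph{interface} terms $(M_{i\pm 1/2\pm}-M_{i\pm 1/2\mp})^2$ with prefactor $\sigma_i^2v_m^2\leq \beta\cdot\sigma_iv_m$, which are absorbed by a fraction $(1-\nu_\beta)$ of $\sigma_iD_i$. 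The cubic $-\tfrac{g^2\pi^2}{6}\sigma_i^3S_i^3$ is bounded by a constant times $\beta$ times the quadratic remainder, using $|\sigma_iS_i|\leq 2\beta$ times the local Maxwellian scale, so it only rescales $C_\beta,\nu_\beta$.

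\textbf{Main obstacle.} The delicate point is a weight mismatch between the quadratic remainder, which carries the factor $M_i$, and the Bregman dissipation $D_i$, which has the smaller weights $M_{i\pm 1/2\pm}$ and can vanish even when $M_i(\xi)>0$ (e.g.\ when the topography jump at the corresponding interface is comparable to $h_i$). I would close this gap by a separate analysis according to whether $M_{i\pm 1/2\mp}(\xi)=0$: in the vanishing case the support inclusion~\eqref{eq:supporti+1/2} forces $M_i(\xi)=0$, so the remainder disappears pointwise at that $\xi$; in the non-vanishing case the strict CFL $\beta<1$ preserves a positive leftover $\nu_\beta>0$ in the dissipation after absorbing the $\sigma_i^2$ corrections.
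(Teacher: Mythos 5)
Your part (i) and the overall architecture of part (ii) — the exact cubic expansion of $H$ around $M_i$, the use of Proposition \ref{prop semi-discrete} in its strong form \eqref{eq:hquasidiffdiss} for the linear-in-$\sigma_i$ term, and the splitting of the quadratic remainder into interface differences (to be absorbed by the dissipation) and cell-interior differences $(M_i-M_{i\pm1/2\mp})^2$ (routed to the $C_\beta$ error) — coincide with the paper's proof. You also correctly isolate the crux: the remainder carries the weight $M_i$, while the dissipation at, say, the interface $i-1/2$ for $\xi>0$ carries the weight $M_{i-1/2-}+2M_{i-1/2+}$, which can degenerate.

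However, your proposed resolution of this weight mismatch fails. You invoke \eqref{eq:supporti+1/2} to claim that $M_{i\pm1/2\mp}(\xi)=0$ forces $M_i(\xi)=0$; this is the \emph{converse} of \eqref{eq:supporti+1/2} and it is false: since $h_{i+1/2-}\leq h_i$, the support of $M_{i+1/2-}$ is contained in that of $M_i$, not the other way around, so $M_{i+1/2-}(\xi)$ vanishes on a nonempty portion of the support of $M_i$ as soon as $z_{i+1/2}>z_i$. Moreover the difficulty is not binary: even where $M_{i-1/2+}(\xi)>0$, the ratio $M_i/(M_{i-1/2-}+2M_{i-1/2+})$ is unbounded, so no strictness of the CFL constant $\beta<1$ lets you absorb $\sigma_i^2\xi^2M_i(M_{i-1/2+}-M_{i-1/2-})^2$ pointwise into $\sigma_i\xi(M_{i-1/2-}+2M_{i-1/2+})(M_{i-1/2+}-M_{i-1/2-})^2$. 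What is actually needed — and what the paper does in Lemma \ref{lemma:d_i_bis} and in \eqref{eq:estmu-}--\eqref{eq:estcross} — is to split the excess weight as $M_i=M_{i\pm1/2\mp}+(M_i-M_{i\pm1/2\mp})$, so that the unabsorbable part is proportional to the topography-controlled quantity $M_i-M_{i\pm1/2\mp}$, and then to treat the resulting cross terms $M_i|M_{i+1/2+}-M_{i+1/2-}|\,|M_i-M_{i-1/2+}|$ by a further case distinction ($M_i\leq M_{i+1/2+}$ versus $M_i>M_{i+1/2+}$) before applying Young's inequality, so that the square of the interface difference always reappears with an admissible weight. The same issue recurs in your cubic remainder, whose natural bound involves the neighbouring Maxwellians $M_{i\pm1}$ and not only $M_i$; asserting that it ``only rescales $C_\beta,\nu_\beta$'' skips the identical absorption problem. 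As written, the proof has a genuine gap at its most delicate step.
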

Theorem~\ref{thentrfull_JSM} has the following corollary.
\begin{corollary}
Under the CFL condition \eqref{eq:vmax}, \eqref{eq:CFLfull},
integrating the estimate \eqref{eq:entrfullystat} with respect to $\xi$,
using~\eqref{eq:entropmini}, \eqref{eq:Un+1-i}, \eqref{eq:diffkinH}
(neglecting the dissipation proportional to $\nu_\beta$) and Lemma~\ref{lemma lipestimate} yields that
\begin{equation}\begin{array}{l}
	\dsp \eta(U_i^{n+1})+gz_ih_i^{n+1}\leq\eta(U_i)+gz_ih_i
	-\sigma_i\Bigl(\widetilde G_{i+1/2}-\widetilde G_{i-1/2}\Bigr)\\
	\dsp\hphantom{\eta(U_i^{n+1})+gz_ih_i^{n+1}\leq}
	+C_\beta (\sigma_iv_m)^2\biggl(g(h_i-h_{i+1/2-})^2+g(h_i-h_{i-1/2+})^2 \biggr),
	\label{eq:estentrint}
	\end{array}
\end{equation}
where $\widetilde G_{i+1/2}$ is defined in \eqref{eq:kingtilde} or equivalently \eqref{eq:numentrflux},
and $C_\beta\geq 0$ depends only on $\beta$.
This is the discrete entropy inequality associated to the HR scheme \eqref{eq:upU}-\eqref{eq:zstar}
with kinetic homogeneous numerical flux \eqref{eq:kinupwind}.
With \eqref{eq:lrstates}-\eqref{eq:zstar} one has
\begin{equation}
	0\leq h_i-h_{i+1/2-}\leq|z_{i+1}-z_i|,\quad 0\leq h_i-h_{i-1/2+}\leq |z_i-z_{i-1}|.
	\label{eq:errestexpl}
\end{equation}
We conclude that the quadratic error terms proportional to $C_\beta$ in the right-hand side of \eqref{eq:estentrint}
(divide \eqref{eq:estentrint} by $\Delta t^n$ to be consistent with \eqref{eq:entropySV})
has the following key properties: it vanishes identically when $z=cst$ (no topography)
or when $\sigma_i\rightarrow 0$ (semi-discrete limit), and as soon as the topography
is Lipschitz continuous, it tends to zero strongly when the grid size tends to $0$
(consistency with the continuous entropy inequality \eqref{eq:entropySV}),
even if the solution contains shocks.
\label{corol:estim_lip_l1}
\end{corollary}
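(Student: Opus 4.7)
The plan is to simply integrate the pointwise kinetic entropy inequality \eqref{eq:entrfullystat} of Theorem \ref{thentrfull_JSM} with respect to $\xi\in\r$ and then interpret each resulting term at the macroscopic level. First I would treat the left-hand side: by the entropy minimization principle \eqref{eq:entropmini} applied to $f_i^{n+1-}$, and by adding the topography contribution $gz_i h_i^{n+1}=\int gz_i f_i^{n+1-}\,d\xi$ (which is exact because $h_i^{n+1}=\int f_i^{n+1-}\,d\xi$ from \eqref{eq:Un+1-i}), I get $\eta(U_i^{n+1})+gz_i h_i^{n+1}\leq\int H(f_i^{n+1-},\xi,z_i)\,d\xi$. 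On the right-hand side, the term $\int H(M_i,\xi,z_i)\,d\xi$ yields $\eta(U_i)+gz_i h_i$ by \eqref{eq:kinint}; the conservative difference $\int(\widetilde H_{i+1/2-}-\widetilde H_{i-1/2+})\,d\xi$ produces $\widetilde G_{i+1/2}-\widetilde G_{i-1/2}$ by the identity \eqref{eq:diffkinH} of Proposition \ref{prop semi-discrete}; and the dissipation proportional to $\nu_\beta$ is nonpositive and can be dropped.

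Next I would handle the error term proportional to $C_\beta$. After integration in $\xi$ it reads
\[
C_\beta(\sigma_iv_m)^2\frac{g^2\pi^2}{6}\int_\r M_i(\xi)\Bigl((M_i-M_{i+1/2-})^2+(M_i-M_{i-1/2+})^2\Bigr)d\xi.
\]
Here I invoke the lemma referenced as Lemma \ref{lemma lipestimate}, which must provide the control $\int M(U,\xi)(M(U,\xi)-M(U',\xi))^2\,d\xi\lesssim g(h-h')^2$ whenever $U'$ is obtained from $U$ only by replacing $h$ with a smaller $h'\leq h$ while keeping $u$ unchanged (which is exactly the structure of the reconstructed states $U_{i+1/2\pm}$ by \eqref{eq:lrstates} and \eqref{eq:domM_i+1/2}). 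Applying this lemma on each of the two pairs $(U_i,U_{i+1/2-})$ and $(U_i,U_{i-1/2+})$ yields \eqref{eq:estentrint}.

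For the final consistency claims, the bounds \eqref{eq:errestexpl} are immediate from definitions: \eqref{eq:hlr} and \eqref{eq:zstar} force $z_{i+1/2}\geq z_i$ and $h_i-h_{i+1/2-}=\min(h_i,z_{i+1/2}-z_i)$, so $0\leq h_i-h_{i+1/2-}\leq|z_{i+1}-z_i|$, and similarly on the other side. To analyze the limit, I would rewrite the inequality in the form of a discrete entropy balance per unit time by dividing \eqref{eq:estentrint} by $\Delta t^n$; the error term becomes
\[
\frac{C_\beta(\sigma_iv_m)^2}{\Delta t^n}\Bigl(g(h_i-h_{i+1/2-})^2+g(h_i-h_{i-1/2+})^2\Bigr)\leq C_\beta v_m^2\sigma_i g\bigl(|z_{i+1}-z_i|^2+|z_i-z_{i-1}|^2\bigr)/\Delta x_i\cdot\Delta x_i,
\]
and, using Lipschitz continuity $|z_{i+1}-z_i|\leq L\,\Delta x$ with fixed Courant number $\sigma_iv_m\leq\beta$, this is bounded by $2C_\beta\beta v_m gL^2\Delta x$, which tends to zero as $\Delta x\to 0$ regardless of possible shocks in $U$. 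It also vanishes identically when $z$ is constant and, without dividing by $\Delta t^n$, it is $O(\sigma_i^2)$ so it disappears in the semi-discrete limit, in agreement with Proposition \ref{prop semi-discrete}.

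The main obstacle is the Lipschitz-type estimate on the Maxwellian moment (Lemma \ref{lemma lipestimate}): the Maxwellian \eqref{eq:kinmaxw} is only Hölder-$1/2$ in $h$ near the boundary of its support, so a naive pointwise bound $|M_i-M_{i+1/2-}|\lesssim|h_i-h_{i+1/2-}|$ fails. The trick is that the extra factor $M_i$ in the integrand confines $\xi$ to the interior of the support of $M_i$ where $2gh_i-(\xi-u_i)^2$ is bounded below, and the square $(M_i-M_{i+1/2-})^2$ combined with integration in $\xi$ absorbs the square-root singularity, producing the clean $g(h_i-h_{i+1/2-})^2$ bound; everything else in the proof is bookkeeping of identities already established in the paper.
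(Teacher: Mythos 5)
Your proposal is correct and follows exactly the route the paper intends for this corollary (whose proof is essentially contained in its own statement): integrate \eqref{eq:entrfullystat} in $\xi$, bound the left-hand side via \eqref{eq:entropmini} and \eqref{eq:Un+1-i}, convert the conservative difference via \eqref{eq:diffkinH}, drop the $\nu_\beta$ dissipation, and control the error term with \eqref{eq:lipest} applied to the pairs $(U_i,U_{i+1/2-})$ and $(U_i,U_{i-1/2+})$, which share the velocity $u_i$ so only the $g(\Delta h)^2$ contribution survives. One minor remark: your closing heuristic for why Lemma~\ref{lemma lipestimate} holds (the factor $M_i$ confining $\xi$ away from the edge of the support, where in fact $2gh_i-(\xi-u_i)^2$ still degenerates) is not the paper's mechanism --- the paper proves it via the convexity identity $b^3-a^3-3a^2(b-a)=(b+2a)(b-a)^2$ and \eqref{eq:idH'}, reducing the integral to the Bregman divergence $\eta(U_2)-\eta(U_1)-\eta'(U_1)(U_2-U_1)$ --- but since you only cite the lemma rather than reprove it, this does not affect the validity of your argument.
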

We state now a counter result saying that it is not possible to remove
the error term in~\eqref{eq:estentrint}.
It is indeed true for the HR scheme even if the homogeneous flux used is not the kinetic one.
\begin{proposition}\label{prop necessary-error}
The HR scheme \eqref{eq:upU}-\eqref{eq:zstar} does not satisfy the fully-discrete
entropy inequality \eqref{eq:estentrint} without quadratic error term, whatever restrictive is the CFL condition.
\end{proposition}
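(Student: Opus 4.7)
The plan is to prove the proposition by constructing an explicit family of initial data violating the putative inequality~\eqref{eq:estentrint} (without the $C_\beta$ term), for any prescribed CFL constant $\beta\in(0,1)$. The guiding mechanism is structural: at a topography jump, the HR momentum correction $g h_i^2/2 - g h_{i+1/2-}^2/2$ appears in $F_{i+1/2-}$ regardless of the homogeneous flux $\F$. Over one time step it creates a momentum impulse $\sim \sigma_i g h_i\Delta z$ in cell $i$, hence a kinetic energy $\frac12 h_i^{n+1}(u_i^{n+1})^2 \sim \sigma_i^2 g^2 h_i(\Delta z)^2$. This is strictly positive and of order $\sigma_i^2(\Delta z)^2$, so it is not of the form that can appear in the conservative flux $\sigma_i(\widetilde G_{i+1/2}-\widetilde G_{i-1/2})$, whose leading contribution is $O(\sigma_i \Delta z)$ with $O(\sigma_i(\Delta z)^2)$ corrections.

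Concretely, I would take a single topography step $z_j=0$ for $j\leq i$, $z_j=\Delta z>0$ for $j\geq i+1$, surrounded by quiescent cells where no reconstruction takes place, so that $\widetilde G_{i-1/2}=\widetilde G_{i+3/2}=0$ and the entropy balance on cells $i,i+1$ reduces to analyzing the interface $i+1/2$ alone. After fixing the water data (for instance $h_j=H$, $u_j=0$, or a small perturbation of a lake-at-rest), I would expand the one-step entropy excess
$\mathcal D_i := \eta(U_i^{n+1})+gz_i h_i^{n+1}-\eta(U_i)-gz_i h_i+\sigma_i(\widetilde G_{i+1/2}-\widetilde G_{i-1/2})$
in $\Delta z$. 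The $O(\sigma_i\Delta z)$ contribution cancels by the linear well-balanced property; the $O(\sigma_i(\Delta z)^2)$ contribution reproduces the semi-discrete dissipation (non-positive by Proposition~\ref{prop semi-discrete}); the $O(\sigma_i^2(\Delta z)^2)$ contribution is the kinetic energy just described. The goal is to exhibit data for which this genuinely $\sigma_i^2$-order positive term dominates the opposing $O(\sigma_i(\Delta z)^2)$ dissipation, so that $\mathcal D_i>0$.

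The main obstacle — and the reason this construction requires care — is that in the simplest symmetric configuration ($h_i=h_{i+1}=H$, $u=0$) the negative $O(\sigma_i(\Delta z)^2)$ semi-discrete dissipation and the positive $O(\sigma_i^2(\Delta z)^2)$ kinetic energy scale in such a way under the CFL rescaling $\sigma_i v_m=\beta$ (with $v_m=\sqrt{2gH}$) that the ratio is a fixed dimensionless number, producing a violation only above a critical CFL of order one. To obtain a violation at arbitrarily restrictive $\beta$ one must suppress the semi-discrete dissipation. The natural way is to choose initial data in which the reconstructed states $U_{i+1/2-}$ and $U_{i+1/2+}$ are much closer than the water states $U_i$ and $U_{i+1}$ — so that the entropy-satisfying homogeneous solver's dissipation is of higher order in $\Delta z$ than the HR momentum correction. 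Once this is arranged, the positive $\sigma_i^2(\Delta z)^2$ kinetic-energy term becomes the strictly dominant contribution to $\mathcal D_i$, yielding $\mathcal D_i>0$ for all admissible $\sigma_i>0$, which proves the proposition. This argument is manifestly independent of the particular homogeneous flux $\F$, since only the HR correction and mass/momentum conservation enter.
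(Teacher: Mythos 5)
Your plan correctly identifies the mechanism (the HR momentum correction produces a positive entropy excess that is quadratic in $\sigma_i$) and, to your credit, you correctly diagnose that the obvious data ($h_j=H$, $u_j=0$ across a topography step, or a lake at rest) cannot work for arbitrarily restrictive CFL: in the first case the homogeneous solver's $O(\sigma_i(\Delta z)^2)$ dissipation dominates the $O(\sigma_i^2(\Delta z)^2)$ gain as $\sigma_i\to 0$, and in the second the scheme is exactly well-balanced so nothing happens. But the proof then stops exactly where the real difficulty begins: you never exhibit data for which the semi-discrete dissipation is suppressed while the scheme remains non-stationary. ``Choose initial data in which the reconstructed states are much closer than the water states'' is the desideratum, not a construction, and ``a small perturbation of a lake-at-rest'' is not pinned down in the one direction that matters. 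The configuration that works is the \emph{false equilibrium}: $u_l=u_r\neq 0$, $h_l+z_l=h_r+z_r$, $z_r>z_l$. Then $h_{i_0+1/2-}=h_{i_0+1/2+}=h_r$ and the two reconstructed states at the interface are \emph{identical}, so the homogeneous solver contributes exactly zero dissipation and the semi-discrete entropy production \eqref{eq:disssemi} vanishes identically; yet this is not a steady state of \eqref{eq:SV} (since $\partial_x(hu)=u\,\partial_x h\neq 0$), so $F_{i_0+1/2-}-F_{i_0-1/2+}\neq 0$. Without this (or an equivalent) explicit choice, your order-counting argument does not close.

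There is also a second, smaller gap in how you propose to conclude. Even granted the right data, an expansion in $\Delta z$ with careful tracking of which terms are $O(\sigma_i)$ versus $O(\sigma_i^2)$ is more delicate than necessary and risks sign ambiguities in the intermediate orders. The paper's route is cleaner: writing ${\cal D}_i^n/\sigma_i$ as in \eqref{eq:locdissnorm}, the strict convexity of $\eta$ makes $\sigma_i\mapsto\eta\bigl(U_i-\sigma_i(F_{i+1/2-}-F_{i-1/2+})\bigr)$ strictly convex whenever $F_{i+1/2-}-F_{i-1/2+}\neq 0$, hence ${\cal D}_i^n/\sigma_i$ is strictly increasing in $\sigma_i$; since its limit at $\sigma_i=0$ is the semi-discrete dissipation, which vanishes for the false-equilibrium data, one gets ${\cal D}_{i_0}^n>0$ for \emph{every} $\sigma_{i_0}>0$ with no expansion at all. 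You would do well to replace the asymptotic comparison of orders by this monotonicity argument; combined with the explicit data above it yields the full statement, uniformly in the CFL restriction and for an arbitrary entropy-satisfying homogeneous flux $\F$.
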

\begin{proof}[Proof of Theorem~\ref{thentrfull_JSM}.]
Using \eqref{eq:kinups} and \eqref{eq:defdeltaM}, one has for $\xi\leq 0$
\begin{equation}\begin{array}{l}
	\dsp f^{n+1-}_i=M_i-\sigma_i\Bigl(\xi M_{i+1/2+}-\xi M_{i-1/2+}+(\xi-u_i)(M_{i-1/2+}-M_{i+1/2-})\Bigr)\\
	\dsp\hphantom{f^{n+1-}_i}=M_i-\sigma_i\Bigl(\xi(M_{i+1/2+}-M_{i+1/2-})+u_i(M_{i+1/2-}-M_{i-1/2+})\Bigr),
	\label{eq:fup-}
	\end{array}
\end{equation}
while for $\xi\geq 0$,
\begin{equation}\begin{array}{l}
	\dsp f^{n+1-}_i=M_i-\sigma_i\Bigl(\xi M_{i+1/2-}-\xi M_{i-1/2-}+(\xi-u_i)(M_{i-1/2+}-M_{i+1/2-})\Bigr)\\
	\dsp\hphantom{f^{n+1-}_i}=M_i-\sigma_i\Bigl(\xi(M_{i-1/2+}-M_{i-1/2-})+u_i(M_{i+1/2-}-M_{i-1/2+})\Bigr).
	\label{eq:fup+}
	\end{array}
\end{equation}
But because of \eqref{eq:domM_i+1/2}, one has $0\leq M_{i+1/2-},M_{i-1/2+}\leq M_i$.
Thus for all $\xi$ we get from \eqref{eq:fup-}-\eqref{eq:fup+} that
$f^{n+1-}_{i}\geq (1-\sigma_i(|u_i|+|\xi-u_i|))M_i\geq 0$ under the CFL condition \eqref{eq:CFLfull},
proving (i).

Then, we write the linearization of $H$ around the Maxwellian $M_i$
\begin{equation}
	H(f^{n+1-}_i,z_i)=H(M_i, z_i) +\partial_f H(M_i,z_i)\bigl(f^{n+1-}_i-M_i\bigr)+L_i,
	\label{eq:linHerr}
\end{equation}
where $L_i$ is a remainder. The linearized term $\partial_f H(M_i,z_i)\bigl(f^{n+1-}_i-M_i\bigr)$ in \eqref{eq:linHerr}
is nothing but the dissipation of the semi-discrete scheme,
that has been estimated in Proposition \ref{prop semi-discrete}.
Thus, multiplying \eqref{eq:hquasidiffdiss} by $-\sigma_i$, using the form \eqref{eq:kinH} of $H$
and the identity
\begin{equation}
	b^3-a^3-3a^2(b-a)=(b+2a)(b-a)^2,
	\label{eq:idremk}
\end{equation}
we get
\begin{equation}\begin{array}{l}
	\dsp \hphantom{\leq}\partial_f H(M_i,z_i)\bigl(f^{n+1-}_i-M_i\bigr)\\
	\dsp\leq -\sigma_i\bigl(\widetilde H_{i+1/2-}-\widetilde H_{i-1/2+}\bigr)\\
	\dsp +\sigma_i\xi\1_{\xi<0}\frac{g^2\pi^2}{6}\bigl(M_{i+1/2+}+2M_{i+1/2-}\bigr)
	\bigl(M_{i+1/2+}-M_{i+1/2-}\bigr)^2\\
	\dsp -\sigma_i\xi\1_{\xi>0}\frac{g^2\pi^2}{6}\bigl(M_{i-1/2-}+2M_{i-1/2+}\bigr)
	\bigl(M_{i-1/2-}-M_{i-1/2+}\bigr)^2.
	\end{array}
	\label{eq:lindiss}
\end{equation}
Then, using again the form of $H$ and \eqref{eq:idremk}, the quadratic term $L_i$ in \eqref{eq:linHerr}
can be expressed as
\begin{equation}
	L_i=\frac{g^2 \pi^2}{6} (2M_i+f^{n+1-}_i) (f^{n+1-}_i-M_i)^2.
	\label{eq:l1}
\end{equation}
We notice that in \eqref{eq:linHerr}, the time variation of the kinetic entropy $H$ is estimated by
a term linearized in $\Delta t^n$, that is itself estimated in \eqref{eq:lindiss} by a space integrated-conservative
difference and nonpositive dissipations, and nonnegative errors $L_i$ which are merely quadratic in $\Delta t^n$.
These errors $L_i$ do not vanish when the topography is constant, and moreover do not tend to zero
strongly for discontinuous data $U$.
The remainder of the argument is to prove that under a CFL condition, the quadratic terms $L_i$
are dominated by the dissipation terms, up to errors that are directly estimated in terms of the variations of the topography $z$.

Using \eqref{eq:fup-}, we have for any $\alpha>0$
\begin{equation}\begin{array}{l}
	\dsp L_i\leq  \frac{g^2 \pi^2}{6} \sigma_i^2(2M_i+f^{n+1-}_i)
	\Bigl((1+\alpha) \xi^2\bigl( M_{i+1/2+} - M_{i+1/2-}\bigr)^2\\
	\dsp\mkern 60mu +(1+1/\alpha)u_i^2\bigl( M_{i+1/2-} - M_{i-1/2+}\bigr)^2\Bigr),
	\quad\mbox{for all }\xi\leq 0,
	\label{eq:l1m}
	\end{array}
\end{equation}
and similarly with \eqref{eq:fup+}
\begin{equation}\begin{array}{l}
	\dsp L_i\leq  \frac{g^2 \pi^2}{6} \sigma_i^2(2M_i+f^{n+1-}_i)
	\Bigl((1+\alpha) \xi^2\bigl( M_{i-1/2+} - M_{i-1/2-}\bigr)^2\\
	\dsp\mkern 60mu +(1+1/\alpha)u_i^2\bigl( M_{i+1/2-} - M_{i-1/2+}\bigr)^2\Bigr),
	\quad\mbox{for all }\xi\geq 0.
	\label{eq:l1p}
	\end{array}
\end{equation}
Therefore, adding the estimates~\eqref{eq:linHerr}, \eqref{eq:lindiss},
\eqref{eq:l1m}, \eqref{eq:l1p} yields
\begin{equation}
	H(f_i^{n+1-},z_i)\leq H(M_i,z_i)-\sigma_i\Bigl(\widetilde{H}_{i+1/2-}
	-\widetilde{H}_{i-1/2+}\Bigr)+d_i,
	\label{eq:entrfullystat_orig}
\end{equation}
where
\begin{equation}\begin{array}{l}
	\dsp d_i = \sigma_i\xi\1_{\xi<0}\frac{g^2\pi^2}{6}\left(M_{i+1/2+}
	+2M_{i+1/2-}+(1+\alpha)\sigma_i\xi (2M_i+f^{n+1-}_i) \right)\\
	\dsp\mkern 350mu \times(M_{i+1/2+} -  M_{i+1/2-})^2\\
	\dsp\hphantom{d_i\leq} -\sigma_i\xi\1_{\xi>0}\frac{g^2\pi^2}{6}\left(M_{i-1/2-}
	+2M_{i-1/2+}-(1+\alpha)\sigma_i\xi(2M_i+f^{n+1-}_i)\right)\\
	\dsp\mkern 350mu \times(M_{i-1/2+} -M_{i-1/2-})^2\\
	\dsp\hphantom{d_i\leq} + \sigma_i^2 u_i^2 \frac{g^2\pi^2}{6}(1+1/\alpha) (2M_i+f^{n+1-}_i)
	(M_{i+1/2-} -  M_{i-1/2+})^2,
	\label{eq:d_i}
	\end{array}
\end{equation}
and $\alpha>0$ is an arbitrary parameter.
The first two lines in~\eqref{eq:d_i} are
generically nonpositive for $\sigma_i$ small enough (recall the bound \eqref{eq:vmax} on $\xi$),
whereas the third line is nonnegative.
\color{white}
\end{proof}
\color{black}
Before going further in the proof of Theorem~\ref{thentrfull_JSM}, i.e. upper bounding
$d_i$ by a sum of a dissipation term and an error, let us state a lemma, that gives
another expression for $d_i$, in which the nonpositive contributions appear clearly.
\begin{lemma}
The term $d_i$ from~\eqref{eq:d_i} can also be written
\begin{eqnarray}
d_i & = &
\sigma_i\xi\1_{\xi<0}\,\gamma_{i+1/2}^-
(M_{i+1/2+} -  M_{i+1/2-})^2\nonumber\\
&& - \sigma_i\xi\1_{\xi>0}\,\gamma_{i-1/2}^+
(M_{i-1/2+} -  M_{i-1/2-})^2 \nonumber\\
&& + \sigma_i^2 \frac{g^2\pi^2}{6} \biggl( (1+1/\alpha)u_i^2
(2M_i+f^{n+1-}_i)(M_{i+1/2-} -  M_{i-1/2+})^2 \nonumber\\
& & \mkern 100mu+ (1+\alpha)\xi^2\bigl(\1_{\xi <0}\,\mu_{i+1/2}^- + \1_{\xi > 0}\,\mu_{i-1/2}^+ \bigr)\biggr),
\label{eq:d_i_bis}
\end{eqnarray}
with
\begin{equation}\begin{array}{l}
	\dsp \gamma_{i+1/2}^-  =  \frac{g^2\pi^2}{6} \biggl(  \bigl(1- (1+\alpha)(\sigma_i\xi)^2\bigr) M_{i+1/2+}\\
	\dsp\hphantom{\gamma_{i+1/2}^-  =\qquad\quad}
	+ \Bigl(2+(1+\alpha)(\sigma_i\xi)^2 + 3(1+\alpha)\sigma_i\xi\Bigr) M_{i+1/2-} \biggr),\\
	\dsp\gamma_{i-1/2}^+  =  \frac{g^2\pi^2}{6} \biggl(  \bigl(1- (1+\alpha)(\sigma_i\xi)^2\bigr) M_{i-1/2-}\\
	\dsp\hphantom{\gamma_{i-1/2}^+  =\qquad\quad}
	+ \Bigl(2+(1+\alpha)(\sigma_i\xi)^2 - 3(1+\alpha)\sigma_i\xi\Bigr) M_{i-1/2+} \biggr),
	\label{eq:lambda+}
	\end{array}
\end{equation}
\begin{equation}\begin{array}{l}
	\dsp \mu_{i+1/2}^-  =  (M_{i+1/2+} -  M_{i+1/2-})^2\Bigl( 3(M_i - M_{i+1/2-})\\
	\dsp\mkern 280mu -\sigma_iu_i (M_{i+1/2-} - M_{i-1/2+})\Bigr),\\
	\dsp\mu_{i-1/2}^+  =  (M_{i-1/2+} -  M_{i-1/2-})^2\Bigl( 3(M_i -M_{i-1/2+})\\
	\dsp\mkern 280mu -\sigma_iu_i (M_{i+1/2-} - M_{i-1/2+})\Bigr).
	\label{eq:mu+}
	\end{array}
\end{equation}
\label{lemma:d_i_bis}
\end{lemma}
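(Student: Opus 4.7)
The proof is a purely algebraic rearrangement of \eqref{eq:d_i}, using the explicit form \eqref{eq:fup-}, \eqref{eq:fup+} of $f_i^{n+1-}$ to eliminate the factor $(1+\alpha)\sigma_i\xi(2M_i+f_i^{n+1-})$ appearing in its first two lines. The third line of \eqref{eq:d_i}, the one in $(1+1/\alpha)u_i^2$, is already in the form required by \eqref{eq:d_i_bis} and is kept verbatim; the whole content of the lemma is therefore a rewriting of the first two lines of \eqref{eq:d_i}.

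Consider first the branch $\xi\leq 0$. Formula \eqref{eq:fup-} gives $2M_i + f_i^{n+1-} = 3M_i - \sigma_i\xi(M_{i+1/2+}-M_{i+1/2-}) - \sigma_i u_i(M_{i+1/2-}-M_{i-1/2+})$, so the bracket inside the first line of \eqref{eq:d_i} becomes $M_{i+1/2+}(1-(1+\alpha)(\sigma_i\xi)^2) + M_{i+1/2-}(2+(1+\alpha)(\sigma_i\xi)^2) + 3(1+\alpha)\sigma_i\xi M_i - (1+\alpha)\sigma_i^2\xi u_i(M_{i+1/2-}-M_{i-1/2+})$. The key step is the splitting $M_i = M_{i+1/2-} + (M_i-M_{i+1/2-})$: its $M_{i+1/2-}$-part supplies exactly the $3(1+\alpha)\sigma_i\xi$ contribution needed to complete the coefficient of $M_{i+1/2-}$ in $\gamma_{i+1/2}^-$ as defined in \eqref{eq:lambda+}, while the remainder $(1+\alpha)\sigma_i\xi\bigl[3(M_i-M_{i+1/2-}) - \sigma_i u_i(M_{i+1/2-}-M_{i-1/2+})\bigr]$, multiplied by $\sigma_i\xi\frac{g^2\pi^2}{6}(M_{i+1/2+}-M_{i+1/2-})^2$, produces exactly $(1+\alpha)\sigma_i^2\xi^2\frac{g^2\pi^2}{6}\mu_{i+1/2}^-$ with $\mu_{i+1/2}^-$ as in \eqref{eq:mu+}.

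The branch $\xi\geq 0$ is strictly symmetric: \eqref{eq:fup+} yields $2M_i + f_i^{n+1-} = 3M_i - \sigma_i\xi(M_{i-1/2+}-M_{i-1/2-}) - \sigma_i u_i(M_{i+1/2-}-M_{i-1/2+})$, and the splitting $M_i = M_{i-1/2+} + (M_i-M_{i-1/2+})$ assembles $\gamma_{i-1/2}^+$ and $\mu_{i-1/2}^+$ in the form prescribed by \eqref{eq:lambda+}, \eqref{eq:mu+}. Note that the cross difference $M_{i+1/2-}-M_{i-1/2+}$ appears identically on both branches because the $u_i$-coefficient in \eqref{eq:fup-} and \eqref{eq:fup+} is the same. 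There is no genuine obstacle: the argument is pure bookkeeping of signs of $\xi$, of the powers of $\sigma_i\xi$, and of the distribution of the terms between $\gamma$ and $\mu$. The only substantive choice is the splitting of $M_i$ on each branch, and it is precisely this choice that makes the residual $\mu$-terms proportional to $M_i-M_{i+1/2-}$ and $M_i-M_{i-1/2+}$, quantities that, through \eqref{eq:errestexpl}, are controlled by the topography increment and are responsible for the $(\Delta z)^2$ error of Theorem \ref{thentrfull_JSM}.
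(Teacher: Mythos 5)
Your proof is correct and follows essentially the same route as the paper: substitute the explicit expressions \eqref{eq:fup-}, \eqref{eq:fup+} for $f_i^{n+1-}$ into the first two lines of \eqref{eq:d_i}, then split $M_i$ as $M_{i+1/2-}+(M_i-M_{i+1/2-})$ (resp.\ $M_{i-1/2+}+(M_i-M_{i-1/2+})$) so that the $3(1+\alpha)\sigma_i\xi$ contribution completes $\gamma_{i\pm1/2}^{\mp}$ and the remainder yields $\mu_{i\pm1/2}^{\mp}$. The sign bookkeeping on each branch and the identification of the residual terms match the paper's computation exactly.
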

\begin{proof}[Proof of Lemma~\ref{lemma:d_i_bis}]
The expression \eqref{eq:fup-} of $f^{n+1-}_i$ for $\xi\leq 0$ allows
to precise the value of $d_i$ in \eqref{eq:d_i}, and gives for $\xi\leq 0$
\begin{eqnarray*}
&  & M_{i+1/2+} + 2M_{i+1/2-}+(1+\alpha)\sigma_i\xi (2M_i+f^{n+1-}_i)\\
& = & (1- (1+\alpha)(\sigma_i\xi)^2) M_{i+1/2+}
	+ (2+(1+\alpha)(\sigma_i\xi)^2) M_{i+1/2-}\\
& &  + (1+\alpha)\sigma_i \xi \left( 3M_i - \sigma_iu_i (M_{i+1/2-} - M_{i-1/2+})\right)\\
& = & (1- (1+\alpha)(\sigma_i\xi)^2) M_{i+1/2+}
	+ (2+(1+\alpha)(\sigma_i\xi)^2 + 3(1+\alpha)\sigma_i\xi) M_{i+1/2-}\\
& &  + (1+\alpha)\sigma_i \xi \left( 3(M_i - M_{i+1/2-})
- \sigma_iu_i (M_{i+1/2-} - M_{i-1/2+})\right).
\end{eqnarray*}
Using \eqref{eq:fup+} we obtain analogously for $\xi\geq 0$
\begin{eqnarray*}
&  & M_{i-1/2-} + 2M_{i-1/2+}-(1+\alpha)\sigma_i\xi (2M_i+f^{n+1-}_i)\\
& = & (1- (1+\alpha)(\sigma_i\xi)^2) M_{i-1/2-}
	+ (2+(1+\alpha)(\sigma_i\xi)^2) M_{i-1/2+}\\
& &  - (1+\alpha)\sigma_i \xi \left( 3M_i - \sigma_iu_i (M_{i+1/2-} - M_{i-1/2+})\right)\\
& = & (1- (1+\alpha)(\sigma_i\xi)^2) M_{i-1/2-}
	+ (2+(1+\alpha)(\sigma_i\xi)^2 - 3(1+\alpha)\sigma_i\xi) M_{i-1/2+}\\
& &  - (1+\alpha)\sigma_i \xi \left( 3(M_i - M_{i-1/2+})
- \sigma_iu_i (M_{i+1/2-} - M_{i-1/2+})\right).
\end{eqnarray*}
These expressions yield the formulas \eqref{eq:d_i_bis}-\eqref{eq:mu+}.
\end{proof}

\begin{proof}[Continuation of the proof of Theorem~\ref{thentrfull_JSM}]
One would like the first two lines of \eqref{eq:d_i_bis} to be nonpositive.
In order to get nonnegative coefficients $\gamma_{i+1/2}^-$, $\gamma_{i-1/2}^+$
in \eqref{eq:d_i_bis}, it is enough that
\begin{equation}
	1-(1+\alpha)(\sigma_i|\xi|)^2\geq 0,\quad 2+(1+\alpha)(\sigma_i|\xi|)^2-3(1+\alpha)\sigma_i|\xi|\geq 0,
	\label{eq:condpposgamma}
\end{equation}
for all $\xi$ in the supports of $M_{i-1}$, $M_i$, $M_{i+1}$.
But since both expressions in \eqref{eq:condpposgamma} are decreasing with respect
to $|\xi|$ for $\sigma_i|\xi|\leq 1$ and because of the CFL condition~\eqref{eq:CFLfull},
they are lower bounded respectively by
\begin{equation}
	1-(1+\alpha)\beta^2,\quad 2+(1+\alpha)\beta^2-3(1+\alpha)\beta.
	\label{eq:lowerbeta}
\end{equation}
But since $\beta<1$, one can choose $\alpha>0$ such that
\begin{equation}
	1+\alpha<\frac{2}{\beta(3-\beta)},
	\label{eq:alphabeta}
\end{equation}
and then the coefficients \eqref{eq:lowerbeta} are positive, and
$\gamma_{i+1/2}^-,\gamma_{i-1/2}^+\geq 0$. 
We denote
\begin{equation}
	c_{\alpha,\beta}=\min\Bigl(1-(1+\alpha)\beta^2
	,2+(1+\alpha)\beta^2-3(1+\alpha)\beta\Bigr)>0.
	\label{eq:defc}
\end{equation}
Then we have
\begin{equation}
	\1_{\xi<0}\gamma_{i+1/2}^-\geq \1_{\xi<0}\frac{g^2\pi^2}{6}c_{\alpha,\beta}(M_{i+1/2+}+M_{i+1/2-}),
	\label{eq:lowgamma-}
\end{equation}
and
\begin{equation}
	\1_{\xi>0}\gamma_{i-1/2}^+\geq \1_{\xi>0}\frac{g^2\pi^2}{6}c_{\alpha,\beta}(M_{i-1/2-}+M_{i-1/2+}).
	\label{eq:lowgamma+}
\end{equation}
Next we write using \eqref{eq:fup-}, \eqref{eq:fup+} and \eqref{eq:domM_i+1/2}
\begin{equation}\begin{array}{l}
	\dsp\hphantom{\leq}\, 2M_i+f_i^{n+1-}\\
	\dsp \leq 3M_i -\sigma_i\xi\1_{\xi<0}(M_{i+1/2+}-M_{i+1/2-})_+\\
	\dsp\hphantom{\leq}\,+ \sigma_i\xi\1_{\xi>0}(M_{i-1/2-}-M_{i-1/2+})_+
	+\sigma_i|u_i||M_{i+1/2-}-M_{i-1/2+}|\\
	\dsp\leq 4M_i -\sigma_i\xi\1_{\xi<0}(M_{i+1/2+}-M_{i+1/2-})_+
	+ \sigma_i\xi\1_{\xi>0}(M_{i-1/2-}-M_{i-1/2+})_+.
	\label{eq:upper2M+f}
	\end{array}
\end{equation}
We can estimate the first quadratic error term from \eqref{eq:d_i_bis} as
\begin{equation}\begin{array}{l}
	\dsp\hphantom{\leq}\, (2M_i+f^{n+1-}_i)(M_{i+1/2-} -  M_{i-1/2+})^2\\
	\dsp\leq 4M_i(M_{i+1/2-} -  M_{i-1/2+})^2\\
	\dsp\hphantom{\leq}\,
	-\sigma_i\xi\1_{\xi<0}M_i|M_{i+1/2+}-M_{i+1/2-}||M_{i+1/2-} -  M_{i-1/2+}|\\
	\dsp\hphantom{\leq}\,
	+\sigma_i\xi\1_{\xi>0}M_i|M_{i-1/2-}-M_{i-1/2+}||M_{i+1/2-} -  M_{i-1/2+}|.
	\label{eq:estquadr1}
	\end{array}
\end{equation}
Finally we estimate
\begin{equation}\begin{array}{l}
	\dsp\hphantom{\leq}\ |\mu_{i+1/2}^-|\\
	\dsp\leq 4(M_{i+1/2+} -  M_{i+1/2-})^2\bigl(|M_i-M_{i+1/2-}|+|M_i-M_{i-1/2+}|\bigr)\\
	\dsp\leq 2|M_{i+1/2+} - M_{i+1/2-}|\Bigl(\epsilon(M_{i+1/2+} -  M_{i+1/2-})^2\\
	\dsp\mkern 100mu +\epsilon^{-1}\bigl(|M_i-M_{i+1/2-}|+|M_i-M_{i-1/2+}|\bigr)^2\Bigr)\\
	\dsp\leq 2\epsilon(M_{i+1/2+} + M_{i+1/2-})(M_{i+1/2+} -  M_{i+1/2-})^2\\
	\dsp\hphantom{\leq} +4\epsilon^{-1}M_i|M_{i+1/2+} - M_{i+1/2-}|\bigl(|M_i-M_{i+1/2-}|+|M_i-M_{i-1/2+}|\bigr),
	\label{eq:estmu-}
	\end{array}
\end{equation}
and similarly
\begin{equation}\begin{array}{l}
	\dsp\hphantom{\leq}\ |\mu_{i-1/2}^+|\\
	\dsp\leq 2\epsilon(M_{i-1/2-} + M_{i-1/2+})(M_{i-1/2+} -  M_{i-1/2-})^2\\
	\dsp\hphantom{\leq} +4\epsilon^{-1}M_i|M_{i-1/2+} - M_{i-1/2-}|\bigl(|M_i-M_{i+1/2-}|+|M_i-M_{i-1/2+}|\bigr),
	\label{eq:estmu+}
	\end{array}
\end{equation}
where $\epsilon>0$ is arbitrary. Putting together in \eqref{eq:d_i_bis} the estimates
\eqref{eq:lowgamma-}, \eqref{eq:lowgamma+},
\eqref{eq:estmu-}, \eqref{eq:estmu+}, we get
\begin{equation}\begin{array}{l}
	\dsp d_i\leq\sigma_i\xi\1_{\xi<0}\frac{g^2\pi^2}{6}\bigl(c_{\alpha,\beta}-2\epsilon(1+\alpha)\sigma_i|\xi|\bigr)\\
	\dsp\mkern 160mu\times(M_{i+1/2+}+M_{i+1/2-})(M_{i+1/2+} -  M_{i+1/2-})^2\\
	\dsp\hphantom{d_i\leq}
	- \sigma_i\xi\1_{\xi>0}\frac{g^2\pi^2}{6}\bigl(c_{\alpha,\beta}-2\epsilon(1+\alpha)\sigma_i|\xi|\bigr)\\
	\dsp\mkern 160mu\times(M_{i-1/2-}+M_{i-1/2+})(M_{i-1/2+} -  M_{i-1/2-})^2\\
	\dsp\hphantom{d_i\leq}
	+\sigma_i^2 \frac{g^2\pi^2}{6} \biggl( (1+1/\alpha)u_i^2
	(2M_i+f^{n+1-}_i)(M_{i+1/2-} -  M_{i-1/2+})^2\\
	\dsp\mkern 60mu+ 4\epsilon^{-1}(1+\alpha)\xi^2M_i\bigl(|M_i-M_{i+1/2-}|+|M_i-M_{i-1/2+}|\bigr)\\
	\dsp\mkern 80mu\times\bigl(\1_{\xi <0}|M_{i+1/2+} - M_{i+1/2-}| + \1_{\xi > 0}|M_{i-1/2+} - M_{i-1/2-}| \bigr)\biggr).
	\label{eq:d_i_est}
	\end{array}
\end{equation}
We set
\begin{equation}
	\nu_\beta^0=c_{\alpha,\beta}-2\epsilon(1+\alpha)\beta,
	\label{eq:valnubeta}
\end{equation}
which is positive if $\epsilon$ is taken small enough (recall that $\alpha>0$ has been chosen
so as to satisfy \eqref{eq:alphabeta}, and hence depends only on $\beta$).
Then using \eqref{eq:entrfullystat_orig} and \eqref{eq:d_i_est}, the two first lines
in the right-hand side of \eqref{eq:d_i_est} give a dissipation as stated in \eqref{eq:entrfullystat},
while the last lines give an error. From \eqref{eq:d_i_est} and \eqref{eq:estquadr1},
for $\xi<0$ the typical error terms take the form
\begin{equation}\begin{array}{l}
	\dsp\hphantom{\leq}\ M_i|M_{i+1/2+}-M_{i+1/2-}||M_i -  M_{i-1/2+}|\\
	\dsp=\bigl(\1_{M_i\leq M_{i+1/2+}}+\1_{M_i>M_{i+1/2+}}\bigr)M_i|M_{i+1/2+}-M_{i+1/2-}||M_i -  M_{i-1/2+}|\\
	\dsp\leq\1_{M_i\leq M_{i+1/2+}}M_i\Bigl(\epsilon_2|M_{i+1/2+}-M_{i+1/2-}|^2+\epsilon_2^{-1}|M_i -  M_{i-1/2+}|^2\Bigr)\\
	\dsp\hphantom{\leq}
	+\1_{M_i>M_{i+1/2+}}\Bigl(M_{i+1/2-}|M_{i+1/2+}-M_{i+1/2-}||M_i -  M_{i-1/2+}|\\
	\dsp\mkern 60mu+|M_i -  M_{i+1/2-}||M_{i+1/2+}-M_{i+1/2-}||M_i -  M_{i-1/2+}|\Bigr)\\
	\dsp\leq\epsilon_2M_{i+1/2+}|M_{i+1/2+}-M_{i+1/2-}|^2
	+\epsilon_2^{-1}M_i|M_i -  M_{i-1/2+}|^2\\
	\dsp\hphantom{\leq}+M_{i+1/2-}\Bigl(\epsilon_2|M_{i+1/2+}-M_{i+1/2-}|^2+\epsilon_2^{-1}|M_i -  M_{i-1/2+}|^2\Bigr)\\
	\dsp\hphantom{\leq}+M_i|M_i -  M_{i+1/2-}||M_i -  M_{i-1/2+}|\\
	\dsp\leq\epsilon_2\bigl(M_{i+1/2+}+M_{i+1/2-}\bigr)|M_{i+1/2+}-M_{i+1/2-}|^2\\
	\dsp\hphantom{\leq}
	+3\epsilon_2^{-1}M_i|M_i -  M_{i-1/2+}|^2+\epsilon_2M_i|M_i -  M_{i+1/2-}|^2.
	\label{eq:estcross}
	\end{array}
\end{equation}
The term proportional to $\epsilon_2$ can therefore be absorbed by $\nu_\beta^0$.
Since a similar estimate holds for $\xi>0$, diminishing slightly $\nu_\beta^0$
by something proportional to $\epsilon_2$ (taken small enough), we get a coefficient $\nu_\beta>0$.
The only remaining error terms finally take the form stated in the last line of \eqref{eq:entrfullystat}.
This completes the proof of (ii) in Theorem \ref{thentrfull_JSM}.
\end{proof}
\begin{remark}
Consider the situation when for some $i_0$ one has
$$u_{i_0-1} = u_{i_0} =u_{i_0+1}\neq 0 \text{ 
and } h_{i_0-1}+z_{i_0-1} = h_{i_0}+z_{i_0} =h_{i_0+1}+z_{i_0+1},$$
with $z_{i_0-1} \neq z_{i_0}$ or $z_{i_0} \neq z_{i_0+1}$.
Then by \eqref{eq:lrstates}, \eqref{eq:hlr}, the reconstructed states satisfy
$U_{i+1/2-}=U_{i+1/2+}$ for $i=i_0-1,i_0$.
We observe that then, in the formula~\eqref{eq:d_i} for $d_i$, the dissipative terms vanish
for $i=i_0$, for all $\xi$. Thus $d_{i_0}\geq 0$ and $\int d_{i_0}(\xi) d\xi > 0$, 
which means that the extra term $d_i$ in \eqref{eq:entrfullystat_orig} gives 
a dissipation with the wrong sign, in agreement with Proposition \ref{prop necessary-error}.
\end{remark}
\begin{proof}[Proof of Proposition~\ref{prop necessary-error}]
It has been proved in \cite{ABBKP} that the semi-discrete HR scheme (limit $\sigma_i\rightarrow 0$)
satisfies the entropy inequality without error term. Here we prove that the fully-discrete scheme does not,
whatever restrictive is the CFL condition. This result holds for an arbitrary numerical flux $\F$ taken for the
homogeneous Saint-Venant system. The argument is as follows.

Consider the local dissipation
\begin{equation}
	{\cal D}_i^n=\eta(U_i^{n+1})+gz_ih_i^{n+1}-\eta(U_i)-gz_ih_i
	+\sigma_i\Bigl(\widetilde G_{i+1/2}-\widetilde G_{i-1/2}\Bigr),
	\label{eq:locdiss}
\end{equation}
where $U^{n+1}_i$ is given by~\eqref{eq:upU},
$F_{i+1/2\pm}$ are defined by \eqref{eq:HRflux}-\eqref{eq:zstar}, and
\begin{equation}
	\widetilde G_{i+1/2}={\cal G}(U_{i+1/2-},U_{i+1/2+})+gz_{i+1/2}\F^0(U_{i+1/2-},U_{i+1/2+}),
	\label{eq:numentrfluxbis}
\end{equation}
where $\cal G$ is the numerical entropy flux associated to $\F$,
and $\F^0$ is the first (density) component of $\F$.
Then, taking into account that $h_i^{n+1}=h_i-\sigma_i(\F^0(U_{i+1/2-},U_{i+1/2+})-\F^0(U_{i-1/2-},U_{i-1/2+}))$, one has
\begin{equation}\begin{array}{l}
	\dsp \frac{{\cal D}_i^n}{\sigma_i}=
	\frac{\eta\left(U_i-\sigma_i(F_{i+1/2-}-F_{i-1/2+})\right)-\eta(U_i)}{\sigma_i}\\
	\dsp\hphantom{\frac{{\cal D}_i^n}{\sigma_i}=}
	-gz_i\bigl(\F^0(U_{i+1/2-},U_{i+1/2+})-\F^0(U_{i-1/2-},U_{i-1/2+})\bigr)\vphantom{\Biggl|}\\
	\dsp\hphantom{\frac{{\cal D}_i^n}{\sigma_i}=}
	+\widetilde G_{i+1/2}-\widetilde G_{i-1/2}\vphantom{\Bigl|}.
	\label{eq:locdissnorm}
	\end{array}
\end{equation}
The entropy $\eta$ being strictly convex, the function
\begin{equation}
	\sigma_i\mapsto \eta\left(U_i-\sigma_i(F_{i+1/2-}-F_{i-1/2+})\right)
	\label{eq:convexsig}
\end{equation}
is convex, and strictly convex if
\begin{equation}
	F_{i+1/2-}-F_{i-1/2+}\not =0.
	\label{eq:convexcond}
\end{equation}
Assuming that this condition holds, we get that the right-hand side of \eqref{eq:locdissnorm}
is strictly increasing with respect to $\sigma_i$. In particular, it will be strictly positive
if the limit as $\sigma_i\rightarrow 0$ of this quantity vanishes. This limit is nothing else than
the dissipation of the semi-discrete scheme
\begin{equation}\begin{array}{l}
	\dsp-\eta'(U_i)(F_{i+1/2-}-F_{i-1/2+})+\widetilde G_{i+1/2}-\widetilde G_{i-1/2}\\
	\dsp-gz_i\bigl(\F^0(U_{i+1/2-},U_{i+1/2+})-\F^0(U_{i-1/2-},U_{i-1/2+})\bigr)\vphantom{\Bigl|}.
	\end{array}
	\label{eq:disssemi}
\end{equation}
Consider data such that
\begin{equation}
	U_i=U_l,\ z_i=z_l\mbox{ for }i\leq i_0,\quad U_i=U_r,\ z_i=z_r\mbox{ for }i>i_0,
	\label{eq:datanondiss}
\end{equation}
for left and right states $U_l=(h_l,h_lu_l)$, $U_r=(h_r,h_ru_r)$ such that
\begin{equation}
	u_l=u_r\not=0,\qquad h_l+z_l=h_r+z_r,\qquad z_r-z_l>0.
	\label{eq:datafalseeq}
\end{equation}
Then one checks easily that \eqref{eq:convexcond} holds for $i=i_0$, and that
\eqref{eq:disssemi} vanishes for all $i$. Therefore, ${\cal D}_{i_0}^n>0$, which proves the claim.
\end{proof}
The following lemma establishes a kind of $L^2-$Lipschitz dependency of the Maxwellian
with respect to $U$, that allows to estimate the integral of the error terms in \eqref{eq:entrfullystat}.
Note that the Maxwellian~\eqref{eq:kinmaxw} is only $1/2$-H\"older continuous at fixed $\xi$.
\begin{lemma}\label{lemma lipestimate}
Let $U_k=(h_k,h_ku_k)$ for $k=1,2,3$ with $h_k\geq 0$.
Then
\begin{equation}\begin{array}{l}
	\dsp\hphantom{\leq}\int_\r M(U_1,\xi)\Bigl(M(U_1,\xi)-M(U_2,\xi)\Bigr)^2d\xi\\
	\dsp\leq \frac{3}{g^2\pi^2}\Bigl(g(h_2-h_1)^2+\min(h_1,h_2)(u_2-u_1)^2\Bigr),
	\end{array}
	\label{eq:lipest}
\end{equation}
and
\begin{equation}\begin{array}{l}
	\dsp\hphantom{\leq}\int_\r M(U_3,\xi)\Bigl(M(U_1,\xi)-M(U_2,\xi)\Bigr)^2d\xi\\
	\dsp\leq \frac{6}{g^2\pi^2}\Bigl(g(h_3-h_1)^2+g(h_3-h_2)^2\\
	\dsp\mkern 20mu
	+\min(h_1,h_3)(u_3-u_1)^2+\min(h_2,h_3)(u_3-u_2)^2\Bigr).
	\end{array}
	\label{eq:lipest3}
\end{equation}
\end{lemma}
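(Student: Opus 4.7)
First I would observe that the three-state inequality \eqref{eq:lipest3} is an immediate corollary of the two-state inequality \eqref{eq:lipest}: the pointwise bound $(M_1-M_2)^2\leq 2(M_3-M_1)^2+2(M_3-M_2)^2$, integrated against $M_3$ and combined with two applications of \eqref{eq:lipest} (with the pairs $(U_3,U_1)$ and $(U_3,U_2)$), yields \eqref{eq:lipest3} with the factor $6/(g^2\pi^2)$ as stated. The task thus reduces to proving \eqref{eq:lipest}.

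For \eqref{eq:lipest}, the plan is to split $M_1-M_2$ through an intermediate state that isolates the height contribution from the velocity contribution. After translating so that $u_1=0$ (the integrands are invariant under simultaneous translation in $u_1,u_2$), set $a_k=\sqrt{2gh_k}$ so that $M_k(\xi)=\sqrt{(a_k^2-(\xi-u_k)^2)_+}/(g\pi)$, and introduce $M_*=M((h_2,0),\xi)$, which shares the velocity of $M_1$ and the height of $M_2$. The inequality $(M_1-M_2)^2\leq 2(M_1-M_*)^2+2(M_*-M_2)^2$ reduces the estimate to
\[
I_A=\int_\r M_1(M_1-M_*)^2\,d\xi,\qquad I_B=\int_\r M_1(M_*-M_2)^2\,d\xi,
\]
which are a pure height-change integral (both factors centered at $0$) and a pure velocity-change integral (both $M_*$ and $M_2$ of height $h_2$), respectively.

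For $I_A$, both $M_1$ and $M_*$ are centered at $0$, so I would use the identity $\sqrt A-\sqrt B=(A-B)/(\sqrt A+\sqrt B)$ together with $(\sqrt A+\sqrt B)^2\geq 4\min(A,B)$ to obtain $(M_1-M_*)^2\leq (h_2-h_1)^2/[\pi^2\min(a_1^2-\xi^2,a_2^2-\xi^2)]$ on the common support. Multiplying by $M_1=\sqrt{a_1^2-\xi^2}/(g\pi)$ and using the elementary formula $\int dv/\sqrt{a^2-v^2}=\pi$ on $(-a,a)$ yields a contribution bounded by $C(h_2-h_1)^2/(g\pi^2)$. The boundary region where only one of $M_1,M_*$ is positive (nonempty only when $h_1>h_2$) is handled using the elementary bound $|a_1-a_2|\leq\sqrt{2g|h_1-h_2|}$ on its length together with $M_1^3\leq C(2g|h_1-h_2|)^{3/2}/(g^3\pi^3)$ there, giving another $C(h_1-h_2)^2/(g\pi^2)$ term.

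For $I_B$, substitute $v=\xi$, $w=u_2$ and write $M_*=f(v)$, $M_2=f(v-w)$, $M_1=\tilde f(v)$ with $f(v)=\sqrt{(a_2^2-v^2)_+}/(g\pi)$ and $\tilde f(v)=\sqrt{(a_1^2-v^2)_+}/(g\pi)$. On the common positive support,
\[
(f(v)-f(v-w))^2=\frac{w^2(w-2v)^2}{g^2\pi^2\bigl(\sqrt{a_2^2-v^2}+\sqrt{a_2^2-(v-w)^2}\bigr)^2},
\]
and AM--GM on the denominator combined with $\sqrt{a_1^2-v^2}\leq\sqrt{a_2^2-v^2}$ (valid when $h_1\leq h_2$; the other case is symmetric by swapping roles) reduces $\tilde f(v)(f(v)-f(v-w))^2$ to $w^2(w-2v)^2/[4g^3\pi^3\sqrt{a_2^2-(v-w)^2}]$. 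Then the bound $(w-2v)^2\leq 2w^2+16gh_1$ on $\mathrm{supp}(\tilde f)=[-a_1,a_1]$, the substitution $y=v-w$, the identity $\int dy/\sqrt{a_2^2-y^2}=\pi$, and the constraint $w^2\leq(a_1+a_2)^2\leq 8g\max(h_1,h_2)$ coming from common-support nonemptiness together yield $I_B\leq C\min(h_1,h_2)w^2/(g^2\pi^2)$. The boundary region where $f(v-w)=0$ but $f(v)>0$ contributes $\int\tilde f(v)f(v)^2dv$ on a set of length $\lesssim|w|$ (since it requires $|v-w|>a_2$ while $|v|\leq a_1$), which is absorbed into the same bound. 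The main obstacle is precisely recovering the sharp $\min(h_1,h_2)$ prefactor in $I_B$ rather than $h_2$ or $h_1+h_2$ that naive estimates yield; this requires carefully exploiting the localizing effect of the weight $\tilde f$ (restriction to $|v|\leq a_1$) together with the common-support constraint on $w$.
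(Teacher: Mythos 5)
Your reduction of \eqref{eq:lipest3} to \eqref{eq:lipest} via $(M_1-M_2)^2\leq 2(M_3-M_1)^2+2(M_3-M_2)^2$ is correct and is essentially what the paper does (Minkowski in $L^2(M_3\,d\xi)$). The problem is your proof of \eqref{eq:lipest} itself, which does not establish the inequality as stated. First, the constant: the splitting through the intermediate state $M_*$ already costs a factor $2$, and each subsequent AM--GM step costs more; you end up with unspecified constants $C$, and there is no route from this sketch back to the precise factor $3/(g^2\pi^2)$. Second, and more seriously, the $\min(h_1,h_2)$ prefactor on the velocity term is exactly the part you do not prove: your own estimate of $I_B$ produces $\max(h_1,h_2)$ or $h_1+h_2$, and you explicitly defer the key step (``this requires carefully exploiting the localizing effect of the weight''). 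That is the content of the lemma, not a detail. Third, one step fails concretely: in $I_A$, when $h_1>h_2$ your bound $(\sqrt A+\sqrt B)^2\geq 4\min(A,B)=4(a_2^2-\xi^2)$ leaves the integrand $\sqrt{a_1^2-\xi^2}/(a_2^2-\xi^2)$, whose numerator does not vanish at $\xi=\pm a_2$ while the denominator vanishes linearly, so the resulting integral diverges; the identity $\int dv/\sqrt{a^2-v^2}=\pi$ only applies in the case $h_1\leq h_2$.

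The paper's proof avoids all of this and is worth internalizing: bound $M_1\leq\frac12(2M_1+M_2)$, recognize via the identity \eqref{eq:idremk} that $\frac{g^2\pi^2}{6}(M_2+2M_1)(M_2-M_1)^2=H_0(M_2,\xi)-H_0(M_1,\xi)-\partial_fH_0(M_1,\xi)(M_2-M_1)$, replace $\partial_fH_0(M_1,\xi)$ by $\eta'(U_1)\cdot(1,\xi)^T$ using the subdifferential property \eqref{eq:idH'} (the inequality goes the right way where $M_1=0$ because there $M_2-M_1\geq0$), and integrate in $\xi$ using the moment relations \eqref{eq:kinmom}. This collapses the whole integral to the Bregman divergence of the macroscopic entropy, $\frac{3}{g^2\pi^2}\bigl(\eta(U_2)-\eta(U_1)-\eta'(U_1)(U_2-U_1)\bigr)=\frac{3}{g^2\pi^2}\bigl(g\frac{(h_2-h_1)^2}{2}+h_2\frac{(u_2-u_1)^2}{2}\bigr)$, with no case analysis on supports and the exact constant. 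The $\min(h_1,h_2)$ then comes for free by running the same argument with the roles of $U_1$ and $U_2$ exchanged (bounding $M_1\leq M_1+2M_2$, at the cost of a factor $2$) and taking the better of the two bounds. If you want to salvage your direct computational approach, you would at minimum need to repair the divergent case in $I_A$, track constants sharply enough to beat $3/(g^2\pi^2)$ after the factor-$2$ splitting, and actually carry out the localization argument for $I_B$; none of these is done, and the first two look unlikely to succeed.
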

\begin{proof}
One has
\begin{equation}\begin{array}{l}
	\dsp\hphantom{\leq}\int_\r M(U_1,\xi)\Bigl(M(U_1,\xi)-M(U_2,\xi)\Bigr)^2d\xi\\
	\dsp\leq\frac{1}{2}\int_\r \Bigl(2M(U_1,\xi)+M(U_2,\xi)\Bigr)\Bigl(M(U_1,\xi)-M(U_2,\xi)\Bigr)^2d\xi\\
	\dsp=\frac{3}{g^2\pi^2}\int_\r\Bigl(H_0(M(U_2,\xi),\xi)-H_0(M(U_1,\xi),\xi)\\
	\dsp\mkern 100mu-\partial_f H_0(M(U_1,\xi),\xi)(M(U_2,\xi)-M(U_1,\xi))\Bigr)d\xi\\
	\dsp\leq\frac{3}{g^2\pi^2}\int_\r\Bigl(H_0(M(U_2,\xi),\xi)-H_0(M(U_1,\xi),\xi)\\
	\dsp\mkern 100mu-\eta'(U_1)\kxi(M(U_2,\xi)-M(U_1,\xi))\Bigr)d\xi\\
	\dsp=\frac{3}{g^2\pi^2}\Bigl(\eta(U_2)-\eta(U_1)-\eta'(U_1)(U_2-U_1)\Bigr)\\
	\dsp=\frac{3}{g^2\pi^2}\Bigl(g\frac{(h_2-h_1)^2}{2}+h_2\frac{(u_2-u_1)^2}{2}\Bigr).
	\end{array}
	\label{eq:lip1}
\end{equation}
We can also estimate $M(U_1,\xi)$ by $M(U_1,\xi)+2M(U_2,\xi)$, giving the same estimate
as \eqref{eq:lip1} with $U_1$ and $U_2$ exchanged and with an extra factor $2$. This proves \eqref{eq:lipest}.
Then, denoting $M_k\equiv M(U_k,\xi)$, according to the Minkowsky inequality,
\begin{equation}\begin{array}{l}
	\dsp\hphantom{\leq} \left(\int_\r M_3\bigl(M_1-M_2\bigr)^2d\xi\right)^{1/2}\\
	\dsp\leq\left(\int_\r M_3\bigl(M_1-M_3\bigr)^2d\xi\right)^{1/2}
	+\left(\int_\r M_3\bigl(M_3-M_2\bigr)^2d\xi\right)^{1/2},
	\label{eq:estmink}
	\end{array}
\end{equation}
Using \eqref{eq:lipest}, we obtain \eqref{eq:lipest3}.
\end{proof}

\section{Conclusion}
We have established that the unmodified hydrostatic reconstruction scheme for the Saint Venant
system with topography satisfies a fully discrete entropy inequality \eqref{eq:estentrint} with error term,
in the case when the homogeneous numerical flux is the kinetic one with the Maxwellian \eqref{eq:kinmaxw}.
This inequality is obtained as the integral with respect to the kinetic variable $\xi$ of
a discrete kinetic entropy inequality \eqref{eq:entrfullystat} with error term.
These error terms are not present in the case when the entropy dissipation is linearized
with respect to the timestep $\Delta t$ (or equivalently in the semi-discrete case). They come from
the less dissipative nature of explicit schemes with respect to their semi-discrete versions,
as appears clearly in the case without topography of Lemma \ref{lemma:seminotopo}.
In the case with topography, the identity \eqref{eq:linHerr} enables to write the entropy
dissipation as a sum of the one for the semi-discrete scheme plus an error term $L_i$
which has the wrong sign, and which is merely quadratic in $\Delta t$, see \eqref{eq:l1}-\eqref{eq:l1p}.
In general, the second-order in $\Delta t$ terms appearing in the entropy dissipation
are dominated (under a CFL condition) by the linear in $\Delta t$ dissipation terms.
However, since here we have a well-balanced scheme, these first-order terms degenerate
at the steady states at rest, and cannot dominate the second-order terms.
This is why error terms remain in \eqref{eq:estentrint}.
Nevertheless, these errors are estimated in the square of the topography jumps,
and do not involve jumps in the unknown $U$, that would not be small in the case of shocks.
This property enables to proceed with a proof of convergence of the scheme, that will
be provided in a forthcoming paper.

An open problem that remains however is to establish the fully discrete entropy inequality with error
\eqref{eq:estentrint} for a HR scheme with general (non kinetic)
homogeneous numerical flux $\F$ satisfying a fully discrete entropy inequality.

\section*{Acknowledgments}
The authors wish to express their warm thanks to Carlos Par\'es Madro\~nal for many fruitful discussions.
This work has been partially funded by the ANR contract ANR-11-BS01-0016 LANDQUAKES.


\bibliographystyle{abbrv}

\end{document}